\newtheorem{thm}{Theorem}[section]
\theoremstyle{definition}
\newtheorem{cor}[thm]{Corollary}
\newtheorem{prop}[thm]{Proposition}
\newtheorem{defn}[thm]{Definition}
\newtheorem{lem}[thm]{Lemma}
\newtheorem{rem}[thm]{Remark}
\newtheorem{ex}[thm]{Example}
\newtheorem{co}[thm]{Question}
\numberwithin{equation}{section}
\begin{document}
\title[Classical and strongly classical 2-absorbing second submodules]{Classical and strongly classical 2-absorbing second submodules\footnote {This research was in part supported by a grant from IPM (No. 94130048)}}

\author[H. Ansari-Toroghy]{H. Ansari-Toroghy*}
\address{\llap{*\,}Department of pure Mathematics\\
Faculty of mathematical
Sciences\\
University of Guilan\\
P. O. Box 41335-19141, Rasht, Iran}
\email{ansari@guilan.ac.ir}

\author[F. Farshadifar]{F. Farshadifar**}
\address{\llap{**\,} (Corresponding Author), Department of Mathematics, Farhangian University, Tehran, Iran.}
\address{\llap{**\,} School of Mathematics, Institute for Research in Fundamental Sciences (IPM), P.O. Box: 19395-5746, Tehran, Iran}
\email{f.farshadifar@cfu.ac.ir}

\subjclass[2010]{13C13, 13C99}%
\keywords {2-absorbing second submodule, classical 2-absorbing second submodule,  strongly classical 2-absorbing second submodule.}

\begin{abstract}
In this paper, we will introduce the concept of classical (resp. strongly classical) 2-absorbing second submodules of modules over a commutative ring as a generalization of 2-absorbing (resp. strongly 2-absorbing) second submodules and investigate some basic properties of these classes of modules.
\end{abstract}

\maketitle
\section{Introduction}
\noindent
Throughout this paper, $R$ will denote a commutative ring with
identity and ``$\subset$" will denote the strict inclusion. Further, $\Bbb Z$ will denote the ring of integers.

Let $M$ be an $R$-module. A proper submodule $P$ of $M$ is said
 to be \emph{prime} if for any $r \in R$ and $m \in M$ with
$rm \in P$, we have $m \in P$ or $r \in (P:_RM)$ \cite{Da78}. Let $N$ be
a submodule of $M$.
A non-zero submodule $S$ of $M$ is said to be \emph{second}
if for each $a \in R$, the homomorphism $ S \stackrel {a} \rightarrow S$
is either surjective or zero \cite{Y01}.
In this case $Ann_R(S)$ is a prime ideal of $R$.

The notion of 2-absorbing ideals as a generalization of prime ideals was introduced and studied in \cite{Ba07}. A proper ideal $I$ of $R$ is a \emph{2-absorbing ideal}
of $R$ if whenever $a, b, c \in R$ and $abc \in I$, then $ab \in I$ or
 $ac \in I$ or $bc \in I$. 
 The authors in \cite{YS11} and \cite{pb12}, extended 2-absorbing ideals
 to 2-absorbing submodules. A proper submodule $N$ of $M$ is called a \emph{2-absorbing submodule }of  $M$ if whenever $abm \in N$
for some $a, b \in R$ and $m \in M$, then $am \in N$ or $bm \in N$ or
$ab \in (N :_R M)$.

In \cite{AF16}, the present authors introduced the dual notion of 2-absorbing submodules (that is, \emph{2-absorbing (resp. strongly 2-absorbing) second submodules}) of $M$ and investigated some properties of these classes of modules. A non-zero submodule $N$ of $M$ is said to be a \emph{2-absorbing second submodule of} $M$ if whenever
 $a, b \in R$, $L$ is a completely irreducible submodule of $M$,
and $abN\subseteq L$, then $aN\subseteq L$ or $bN\subseteq L$ or $ab \in Ann_R(N)$.
A non-zero submodule $N$ of $M$ is said to be a \emph{strongly 2-absorbing second submodule of} $M$ if whenever
 $a, b \in R$, $K$ is a submodule of $M$,
and $abN\subseteq K$, then $aN\subseteq K$ or $bN\subseteq K$ or $ab \in Ann_R(N)$.

In \cite{mto16}, the authors introduced the notion of classical 2-absorbing
 submodules as a generalization of 2-absorbing submodules and studied some properties of this class of modules. A proper submodule $N$ of $M$ is called \emph{classical 2-absorbing submodule}
if whenever $a, b, c \in R$ and $m \in M$ with $abcm \in N$, then $abm \in N$ or $acm \in N$ or $bcm \in N$ \cite{mto16}.

The purpose of this paper is to introduce the concepts of classical and strongly classical 2-absorbing second submodules of an $R$-module $M$ as dual notion of classical 2-absorbing submodules and provide some information concerning these new classes of modules. We characterize classical (resp. strongly classical)  2-absorbing second submodules in Theorem \ref{t2.19} (resp. Theorem \ref{t52.19}). Also, we consider the relationship between classical 2-absorbing and strongly classical 2-absorbing second submodules in Examples \ref{e75.1}, \ref{e57.2}, and Propositions \ref{p3.5}. Theorem \ref{t2.24} (resp. Theorem \ref{t52.24}) of this paper shows that if $M$ is an Artinian $R$-module, then every non-zero submodule of $M$ has only a finite number of maximal
classical (resp. strongly classical) 2-absorbing second submodules. Further, among other results, we investigate strongly classical 2-absorbing second submodules of a
finite direct product of modules in Theorem \ref{t52.26}.

\section{Classical 2-absorbing second submodules}
\noindent
Let $M$ be an $R$-module. A proper submodule $N$ of
$M$ is said to be \emph{completely irreducible} if $N=\bigcap _
{i \in I}N_i$, where $ \{ N_i \}_{i \in I}$ is a family of
submodules of $M$, implies that $N=N_i$ for some $i \in I$. It is
easy to see that every submodule of $M$ is an intersection of
completely irreducible submodules of $M$ \cite{FHo06}.

We frequently use the following basic fact without further comment.
\begin{rem}\label{r2.1}
Let $N$ and $K$ be two submodules of an $R$-module $M$. To prove $N\subseteq K$, it is enough to show that if $L$ is a completely irreducible submodule of $M$ such that $K\subseteq L$, then $N\subseteq L$.
\end{rem}

\begin{defn}\label{d2.12}
Let $N$ be a non-zero submodule of an $R$-module $M$. We say that $N$ is a \emph{classical 2-absorbing second submodule of} $M$ if whenever $a, b, c \in R$, $L$ is a completely irreducible submodule of $M$, and $abcN\subseteq L$, then $abN\subseteq L$ or
$bcN\subseteq L$ or $acN \subseteq L$. We say $M$ is a \emph{classical 2-absorbing second module} if $M$ is a classical 2-absorbing second submodule of itself.
\end{defn}

\begin{thm}\label{t2.19}
Let $M$ be an $R$-module and $N$ be a non-zero submodule of $M$. Then the
following statements are equivalent:
\begin{itemize}
  \item [(a)] $N$ is a classical 2-absorbing second submodule of $M$;
  \item [(b)] For every $a, b \in R$ and  completely irreducible submodule $L$ of $M$ with $abN \not \subseteq L$, $(L:_RabN)=(L:_RaN) \cup(L:_RbN)$;
  \item [(c)] For every $a, b \in R$ and  completely irreducible submodule $L$ of $M$ with $abN \not \subseteq L$, $(L:_RabN)=(L:_RaN)$ or $(L:_RabN)=(L:_RbN)$;
  \item [(d)] For every $a, b \in R$, every ideal $I$ of $R$, and completely irreducible submodule $L$ of $M$ with $abIN \subseteq L$, either $abN \subseteq L$ or $aIN \subseteq L$ or $bIN \subseteq L$;
  \item [(e)] For every $a \in R$, every ideal $I$ of $R$, and completely irreducible submodule $L$ of $M$ with $aIN \not \subseteq L$, $(L:_RaIN)=(L:_RIN)$ or $(L:_RaIN)=(L:_RaN)$;
  \item [(f)] For every $a \in R$, ideals $I, J$ of $R$, and completely irreducible submodule $L$ of $M$ with $aIJN \subseteq L$, either $aIN \subseteq L$ or $aJN \subseteq L$ or $IJN \subseteq L$;
  \item [(g)] For ideals $I, J$ of $R$, and completely irreducible submodule $L$ of $M$ with $IJN \not \subseteq L$, $(L:_RIJN)=(L:_RIN)$ or $(L:_RIJN)=(L:_RJN)$;
  \item [(h)] For ideals $I_1, I_2, I_3$ of $R$, and completely irreducible submodule $L$ of $M$ with $I_1I_2I_3N \subseteq L$, either $I_1I_2N \subseteq L$ or $I_1I_3N \subseteq L$ or $I_2I_3N \subseteq L$;
  \item [(i)] For each completely irreducible submodule $L$ of $M$ with $N \not \subseteq L$, $(L:_RN)$ is a 2-absorbing ideal of $R$.
\end{itemize}
\end{thm}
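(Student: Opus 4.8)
The plan is to funnel all nine conditions through the single ideal $(L:_RN)$ attached to a completely irreducible submodule $L$, using throughout the tautology that for an element $x$ or an ideal $X$ of $R$ one has $XN\subseteq L$ exactly when $X\subseteq(L:_RN)$. Two soft facts will be invoked repeatedly. First, since $L$ is stable under the $R$-action, one has the inclusions $(L:_RaN),(L:_RbN)\subseteq(L:_RabN)$, $(L:_RIN),(L:_RaN)\subseteq(L:_RaIN)$, $(L:_RIN),(L:_RJN)\subseteq(L:_RIJN)$, and their obvious analogues. Second, an ideal which is the set-theoretic union of two ideals must equal one of them. It is the second fact that promotes every ``union'' clause into an ``equals one of them'' clause, and it gives (b)$\Leftrightarrow$(c) essentially for free, as well as (d)$\Leftrightarrow$(e) and (f)$\Leftrightarrow$(g).

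I would begin with the backbone (a)$\Leftrightarrow$(b)$\Leftrightarrow$(c)$\Leftrightarrow$(i). The equivalence (a)$\Leftrightarrow$(i) is little more than a restatement: $abcN\subseteq L$ reads $abc\in(L:_RN)$, the three alternatives in Definition \ref{d2.12} read $ab,bc,ac\in(L:_RN)$, and $(L:_RN)$ is a proper ideal exactly when $N\not\subseteq L$ (if $N\subseteq L$ then (a) holds trivially and (i) is vacuous). For (a)$\Rightarrow$(b) one gets ``$\supseteq$'' in $(L:_RabN)=(L:_RaN)\cup(L:_RbN)$ from the first soft fact and ``$\subseteq$'' by applying Definition \ref{d2.12} to the triple $c,a,b$; then (b)$\Rightarrow$(c) is the second soft fact and (c)$\Rightarrow$(a) simply reverses the reading.

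The substantive step --- and the one I expect to be the real obstacle --- is the passage from element-indexed to ideal-indexed absorbing, namely (a)$\Rightarrow$(d). I would run the Anderson--Badawi-type sum argument: assuming $abIN\subseteq L$ while $abN\not\subseteq L$, $aIN\not\subseteq L$ and $bIN\not\subseteq L$, choose $i_1,i_2\in I$ with $ai_1N\not\subseteq L$ and $bi_2N\not\subseteq L$, apply (a) to $(a,b,i_1)$, $(a,b,i_2)$ and $(a,b,i_1+i_2)$ in turn, and extract a contradiction from the fact that, once $ai_2N\subseteq L$ and $bi_1N\subseteq L$ have been forced, $a(i_1+i_2)N$ and $b(i_1+i_2)N$ cannot both lie in $L$; all the manipulation takes place inside the submodule $L$. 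With (d) in hand the rest is bookkeeping: (d)$\Rightarrow$(f) comes from fixing $a$, applying (d) to $a,j,I$ for each $j\in J$, and writing $J$ as the union of the two ideals $J\cap(L:_RaN)$ and $J\cap(L:_RIN)$; (f)$\Rightarrow$(h) is the identical device applied to $I_1$ with the pair $I_2,I_3$; and (h)$\Rightarrow$(a) is just the principal-ideal case. Finally (d)$\Rightarrow$(e) and (f)$\Rightarrow$(g) are the second soft fact applied to $(L:_RaIN)=(L:_RaN)\cup(L:_RIN)$ and $(L:_RIJN)=(L:_RIN)\cup(L:_RJN)$, while (e)$\Rightarrow$(d) and (g)$\Rightarrow$(f) are one-line chases (e.g.\ from $abIN\subseteq L$ observe $b\in(L:_RaIN)$ and split by (e)). The resulting implications
\[
\text{(a)}\Leftrightarrow\text{(b)}\Leftrightarrow\text{(c)},\qquad \text{(a)}\Leftrightarrow\text{(i)},\qquad \text{(a)}\Rightarrow\text{(d)}\Leftrightarrow\text{(e)},\qquad \text{(d)}\Rightarrow\text{(f)}\Leftrightarrow\text{(g)},\qquad \text{(f)}\Rightarrow\text{(h)}\Rightarrow\text{(a)}
\]
close the loop and yield the equivalence of all nine statements.
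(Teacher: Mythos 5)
Your proof is correct, but you take a more laborious route than the paper for the passage from element-indexed to ideal-indexed conditions. The paper's proof is a single linear cycle $(a)\Rightarrow(b)\Rightarrow(c)\Rightarrow(d)\Rightarrow(e)\Rightarrow(f)\Rightarrow(g)\Rightarrow(h)\Rightarrow(a)$ together with $(h)\Leftrightarrow(i)$, and its key observation is that condition (c) --- the colon ideal $(L:_RabN)$ \emph{equals} one of $(L:_RaN)$, $(L:_RbN)$, rather than merely being their union --- makes the promotion to an arbitrary ideal $I$ in (d) completely trivial: $I\subseteq(L:_RabN)$ forces $I\subseteq(L:_RaN)$ or $I\subseteq(L:_RbN)$ with no further work, and the same device carries (e) to (f) and (g) to (h). The only place where any genuine algebra happens is the one-time use of the fact that an ideal equal to a union of two ideals equals one of them, in $(b)\Rightarrow(c)$. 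You instead prove $(a)\Rightarrow(d)$ directly by the Anderson--Badawi two-element sum argument (choosing $i_1,i_2\in I$ and testing $i_1+i_2$), and then re-invoke the union-of-two-ideals fact separately for $(d)\Rightarrow(f)$ and $(f)\Rightarrow(h)$; your cycle $(a)\Rightarrow(d)\Rightarrow(f)\Rightarrow(h)\Rightarrow(a)$, with $(b),(c),(e),(g),(i)$ hung off it, does close correctly, and each individual step checks out (in particular your contradiction from $a(i_1+i_2)N$ and $b(i_1+i_2)N$ is sound, as are the one-line chases $(e)\Rightarrow(d)$ and $(g)\Rightarrow(f)$). What the paper's arrangement buys is economy: the sum argument is performed exactly once, hidden inside the union fact, and every subsequent implication is one or two lines; what your arrangement buys is that $(a)\Rightarrow(d)$ is self-contained and does not depend on first establishing the equality form (c), at the cost of essentially re-deriving the same prime-avoidance-for-two-ideals principle three times.
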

\begin{proof}
$(a) \Rightarrow (b)$
Let $t \in (L:_RabN)$. Then $tabN \subseteq L$. Since $abN \not \subseteq L$, $atN \subseteq L$ or $btN \subseteq L$ as needed.

$(b) \Rightarrow (c)$
This follows from the fact that if an ideal is the union of two ideals, then it is equal to one of them.

$(c) \Rightarrow (d)$
Let for some $a, b \in R$, an ideal $I$ of $R$, and completely irreducible submodule $L$ of $M$, $abIN \subseteq L$. Then $I \subseteq  (L:_RabN)$. If $abN \subseteq L$, then we are done. Assume that $abN \not \subseteq L$. Then by part (c), $I \subseteq  (L:_RbN)$ or $I \subseteq  (L:_RaN)$  as desired.

$(d) \Rightarrow (e)\Rightarrow (f) \Rightarrow (g)\Rightarrow (h)$
The proofs are similar to that of the previous implications.

$(h) \Rightarrow (a)$
 Trivial.

$(h)\Leftrightarrow (i)$
This is straightforward.
\end{proof}

We recall that an $R$-module $M$ is said to be a \emph{cocyclic module} if
$Soc_R(M)$ is a large and simple submodule of $M$ \cite{Y98}. (Here $Soc_R(M)$
denotes the sum of all minimal submodules of $M$.) A submodule $L$ of $M$ is a completely irreducible submodule of $M$ if and only if $M/L$ is a cocyclic $R$-module \cite{FHo06}.
\begin{cor}\label{c2.20}
Let $N$ be a classical 2-absorbing second submodule of a cocyclic $R$-module $M$. Then $Ann_R(N)$ is a 2-absorbing ideal of $R$.
\end{cor}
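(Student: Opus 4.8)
The plan is to reduce the statement to part (i) of Theorem \ref{t2.19} applied to the zero submodule. The key observation is that since $M$ is cocyclic, $M/0 \cong M$ is a cocyclic $R$-module, so by the cited characterization (a submodule $L$ of $M$ is completely irreducible if and only if $M/L$ is cocyclic) the zero submodule $0$ is a completely irreducible submodule of $M$. Note also that $0$ is a \emph{proper} submodule because $N$ is non-zero forces $M \neq 0$.

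Next I would invoke the equivalence $(a) \Leftrightarrow (i)$ of Theorem \ref{t2.19}. Since $N$ is a classical 2-absorbing second submodule of $M$ and $N \neq 0$, we have $N \not\subseteq 0$, so taking $L = 0$ in statement (i) yields that $(0 :_R N)$ is a 2-absorbing ideal of $R$. Finally I would simply note that $(0 :_R N) = Ann_R(N)$ by definition, which gives the conclusion.

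There is essentially no real obstacle here; the only points that need care are the bookkeeping facts that $0$ is a genuine (proper) completely irreducible submodule in the cocyclic case and that $N \not\subseteq 0$, both of which are immediate from $N \neq 0$ and the definition of cocyclic. The substantive content has already been done in Theorem \ref{t2.19}, so the corollary is just the specialization $L = 0$.
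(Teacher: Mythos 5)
Your proof is correct and is essentially the paper's own argument: the paper likewise deduces the corollary from Theorem \ref{t2.19} $(a)\Rightarrow(i)$ by observing that $(0)$ is a completely irreducible submodule of the cocyclic module $M$. Your additional remarks (that $N\neq 0$ gives $N\not\subseteq 0$, and the identification $(0:_RN)=Ann_R(N)$) just make explicit what the paper leaves implicit.
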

\begin{proof}
This follows from Theorem \ref{t2.19} $(a) \Rightarrow (i)$, because $(0)$ is a completely irreducible submodule of $M$.
\end{proof}

\begin{ex}
For any prime integer $p$, let $M=\Bbb Z_{p^\infty}$ as a $\Bbb Z$-module and
$G_i=\langle 1/p^i+\Bbb Z\rangle$ for $i=1,2,3,\ldots$. Then $G_i$ is not a classical 2-absorbing second submodule of $M$ for $i=3,4,5,\ldots$.
\end{ex}

\begin{lem}\label{l9.1}
Every 2-absorbing second submodule of $M$ is a classical 2-absorbing second submodule of $M$.
\end{lem}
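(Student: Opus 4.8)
The plan is to unwind the definition of a 2-absorbing second submodule once and then reduce to a single residual case. Let $N$ be a 2-absorbing second submodule of $M$, let $a,b,c\in R$, and let $L$ be a completely irreducible submodule of $M$ with $abcN\subseteq L$; I must produce one of $abN\subseteq L$, $bcN\subseteq L$, $acN\subseteq L$. The first step is to apply the defining property of $N$ to the pair $(ab,c)\in R\times R$ and the submodule $L$, which is legitimate since $(ab)(c)N=abcN\subseteq L$: it yields $abN\subseteq L$, or $cN\subseteq L$, or $abc=(ab)(c)\in Ann_R(N)$. If $abN\subseteq L$ we are done. If $cN\subseteq L$, then, since $L$ is a submodule, $bL\subseteq L$, so $bcN=b(cN)\subseteq bL\subseteq L$ and we are done again.

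This leaves the case $abc\in Ann_R(N)$, i.e.\ $abcN=0$; note that here the 2-absorbing second property of $N$ gives no information, because the alternative ``$abc\in Ann_R(N)$'' already holds for every completely irreducible $L$. The key input I would bring in is that the annihilator of a 2-absorbing second submodule is a 2-absorbing ideal of $R$ (a property of this class established in \cite{AF16}). Granting it, $abc\in Ann_R(N)$ forces $ab\in Ann_R(N)$, $ac\in Ann_R(N)$, or $bc\in Ann_R(N)$, and the corresponding one of $abN$, $acN$, $bcN$ equals $0\subseteq L$. This exhausts the cases. An essentially equivalent route runs through Theorem \ref{t2.19}, $(a)\Leftrightarrow(i)$: it suffices to show that $(L:_RN)$ is a 2-absorbing ideal of $R$ for every completely irreducible $L$ with $N\not\subseteq L$. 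For such an $L$ the 2-absorbing second hypothesis says exactly that $xy\in(L:_RN)$ implies $x\in(L:_RN)$, $y\in(L:_RN)$, or $xy\in Ann_R(N)$. So, given $xyz\in(L:_RN)$, applying this to $(xy,z)$ gives $xy\in(L:_RN)$, or $z\in(L:_RN)$ (whence $xz\in(L:_RN)$ because $L$ is a submodule), or $xyz\in Ann_R(N)$, in which last case the 2-absorbing property of $Ann_R(N)\subseteq(L:_RN)$ puts one of $xy,xz,yz$ into $(L:_RN)$; hence $(L:_RN)$ is 2-absorbing.

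I expect the only genuine obstacle to be the case $abcN=0$: the hypothesis that $N$ is 2-absorbing second is vacuous there, so the argument truly rests on the separate structural fact that $Ann_R(N)$ is a 2-absorbing ideal of $R$. Everything else is a short verification using nothing more than the fact that completely irreducible submodules, being submodules, are closed under multiplication by elements of $R$.
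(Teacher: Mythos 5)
Your reduction and the treatment of the first two cases are fine, but the third case rests on a claim that is false, so the proof has a genuine gap. The assertion that $Ann_R(N)$ is a 2-absorbing ideal of $R$ for every 2-absorbing second submodule $N$ does not hold with the definition used here (the one quantified over completely irreducible submodules $L$). Take $R=\Bbb Z$ and $N=M=\Bbb Z_{30}$. The completely irreducible submodules of $M$ are exactly $2M$, $3M$, $5M$, and for a prime $p\in\{2,3,5\}$ one has $abM\subseteq pM$ iff $p\mid ab$ iff $aM\subseteq pM$ or $bM\subseteq pM$; hence $M$ is a 2-absorbing second module. Yet $Ann_{\Bbb Z}(M)=30\Bbb Z$ is not a 2-absorbing ideal, since $2\cdot 3\cdot 5\in 30\Bbb Z$ while $6,10,15\notin 30\Bbb Z$. (The annihilator statement is true for \emph{strongly} 2-absorbing second submodules, where one quantifies over all submodules $K$; that is likely the source of the confusion. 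Note also that Corollary \ref{c2.20} asserts 2-absorbingness of the annihilator only when $M$ is cocyclic.) The same defect sinks your alternative route through Theorem \ref{t2.19} $(a)\Leftrightarrow(i)$, which again invokes 2-absorbingness of $Ann_R(N)$.

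The case $abc\in Ann_R(N)$ is not actually an obstacle if you apply the hypothesis differently. Instead of grouping as $(ab)(c)$ acting on $N$ into $L$, pass to the colon submodule: $abcN\subseteq L$ gives $abN\subseteq (L:_Mc)$, and $(L:_Mc)$ is again a completely irreducible submodule of $M$ (this is \cite[2.1]{AFS124}; and if $(L:_Mc)=M$ then $cN\subseteq L$, hence $bcN\subseteq L$ trivially). Applying the 2-absorbing second property of $N$ to $a$, $b$ and $(L:_Mc)$ yields $aN\subseteq(L:_Mc)$ or $bN\subseteq(L:_Mc)$ or $ab\in Ann_R(N)$, i.e., $acN\subseteq L$ or $bcN\subseteq L$ or $abN=0\subseteq L$. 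Here the escape clause lands on $ab$ rather than on $abc$, so it is itself one of the three desired conclusions and no auxiliary fact about $Ann_R(N)$ is needed. This is exactly the paper's proof.
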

\begin{proof}
Let $N$ be a 2-absorbing second submodule of $M$, $a, b, c \in R$, $L$ a completely irreducible submodule of $M$, and $abcN\subseteq L$. Then $abN\subseteq (L:_Mc)$. Thus $aN \subseteq (L:_Mc)$  or $bN \subseteq (L:_Mc)$ or $abN=0$ because by \cite[2.1]{AFS124}, $(L:_Mc)$ is a completely irreducible submodule of $M$. Hence
$acN\subseteq L$ or $bcN\subseteq L$ or $abN\subseteq L$ as needed.
\end{proof}

\begin{ex}\label{e9.1}
Consider $M=\Bbb Z_{pq} \oplus \Bbb Q$ as a $\Bbb Z$-module, where $p, q$ are prime integers. Then $M$ is a classical 2-absorbing second module which is not a strongly 2-absorbing second module.
\end{ex}

\begin{prop}\label{c11.6}
Let $N$ be a classical 2-absorbing second submodule of an $R$-module $M$.
Then we have the following.
\begin{itemize}
   \item [(a)] If $a \in R$, then  $a^nN=a^{n+1}N$, for all $n \geq 2$.
   \item [(b)] If $L$ is a completely irreducible submodule of $M$ such that $N \not \subseteq L$, then $\sqrt{(L :_R N)}$ is a 2-absorbing ideal of $R$.
  \end{itemize}
\end{prop}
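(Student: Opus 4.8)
The plan is to treat the two parts separately, using Definition \ref{d2.12} directly for (a) and combining it with the formation of radicals for (b).

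For (a), I would first reduce to the case $n=2$: once $a^2N=a^3N$ has been proved, multiplying both sides by $a^{n-2}$ (legitimate since $n\geq 2$) yields $a^nN=a^{n+1}N$ for every $n\geq 2$. Since $a^3N\subseteq a^2N$ is automatic, the only thing to check is the inclusion $a^2N\subseteq a^3N$, and by Remark \ref{r2.1} this reduces to showing that every completely irreducible submodule $L$ of $M$ with $a^3N\subseteq L$ satisfies $a^2N\subseteq L$. Now $a^3N=a\cdot a\cdot aN\subseteq L$, so Definition \ref{d2.12}, applied with the triple of ring elements all equal to $a$, gives $aaN\subseteq L$ or $aaN\subseteq L$ or $aaN\subseteq L$; all three alternatives are the single assertion $a^2N\subseteq L$, which is exactly what we want. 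There is no real obstacle here once one notices that the three conclusions of the definition collapse to one when $a=b=c$.

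For (b), set $P=\sqrt{(L:_RN)}$. Since $N\not\subseteq L$ we have $1\notin (L:_RN)$, so $(L:_RN)$ is a proper ideal, and hence so is $P$. To see that $P$ is 2-absorbing, take $a,b,c\in R$ with $abc\in P$ and pick $n\geq 1$ with $(abc)^n\in (L:_RN)$, that is, $a^nb^nc^nN\subseteq L$. Applying Definition \ref{d2.12} to the triple $a^n,b^n,c^n$ and the (completely irreducible) submodule $L$ gives $a^nb^nN\subseteq L$ or $b^nc^nN\subseteq L$ or $a^nc^nN\subseteq L$; since $R$ is commutative these say $(ab)^nN\subseteq L$, $(bc)^nN\subseteq L$, or $(ac)^nN\subseteq L$, i.e. $ab\in P$ or $bc\in P$ or $ac\in P$. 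Thus $P$ is a 2-absorbing ideal of $R$.

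Alternatively, (b) follows from Theorem \ref{t2.19} $(a)\Rightarrow(i)$, which shows that $(L:_RN)$ itself is 2-absorbing, together with the standard fact that the radical of a 2-absorbing ideal is again 2-absorbing (see \cite{Ba07}); the argument in the previous paragraph is essentially that fact written out in the present setting. In summary, I anticipate no serious difficulty in either part — the only points that call for a moment's attention are the coincidence of the three alternatives in (a) and the verification that $P$ is proper in (b).
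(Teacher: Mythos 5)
Your proof is correct and follows essentially the same route as the paper: part (a) is the same reduction to $a^2N=a^3N$ via Remark \ref{r2.1} with the three alternatives of Definition \ref{d2.12} collapsing when $a=b=c$, and part (b) applies the defining property to the triple $a^n,b^n,c^n$. In fact your write-up of (b) is the more accurate one: the paper's proof asserts $a^tN\subseteq L$ or $b^tN\subseteq L$ or $c^tN\subseteq L$, which is not what the definition yields, whereas your conclusion $a^nb^nN\subseteq L$ or $b^nc^nN\subseteq L$ or $a^nc^nN\subseteq L$ is exactly what it gives and immediately puts $ab$, $bc$, or $ac$ in $\sqrt{(L:_RN)}$.
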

\begin{proof}
(a) It is enough to show that $a^2N=a^3N$.
It is clear that $a^3N \subseteq a^2N$. Let $L$ be a completely irreducible submodule of $M$ such that $a^3N \subseteq L$. Since $N$ is a classical 2-absorbing second submodule, $a^2N \subseteq L$. This implies that $a^2N \subseteq a^3N$.

(b) Assume that $a, b, c \in R$ and $abc \in \sqrt{(L :_R N)}$. Then there is a positive integer $t$ such that $a^tb^tc^tN \subseteq L$. By hypotheses, $N$ is a classical 2-absorbing second submodule of $M$, thus $a^tN \subseteq L$ or $b^tN \subseteq L$ or $c^tN \subseteq L$. Therefore, $a \in \sqrt{(L :_R N)}$ or $b \in \sqrt{(L :_R N)}$ or $c \in \sqrt{(L :_R N)}$.
\end{proof}

\begin{thm}\label{t1.1}
Let $N$ be a submodule of an $R$-module $M$. Then we have the following.
\begin{itemize}
\item [(a)] If $N$ is a classical 2-absorbing second submodule of
$M$, then $IN$ is a classical 2-absorbing second submodule of $M$ for all ideals $I$ of $R$ with $I \not \subseteq Ann_R(N)$.
\item [(b)] If $N$ is a classical 2-absorbing submodule of $M$, then $(N:_RI)$ is a classical 2-absorbing submodule of $M$ for all ideals $I$ of $R$ with $I \not \subseteq (N:_RM)$.
\item [(c)] Let $f : M \rightarrow \acute{M}$ be a monomorphism of R-modules. If $\acute{N}$ is a classical 2-absorbing second submodule of $f(M)$, then $f^{-1}(\acute{N})$ is a classical 2-absorbing second submodule of $M$.
\end{itemize}
\end{thm}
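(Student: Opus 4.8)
The plan is to treat the three parts separately, reducing each to something already established; nothing here is really an obstacle apart from one grouping choice in (b).

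\medskip

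\emph{Part (a).} Since $I\not\subseteq Ann_R(N)$ we have $IN\neq 0$, so $IN$ is a legitimate candidate. Given $a,b,c\in R$ and a completely irreducible submodule $L$ of $M$ with $abc(IN)\subseteq L$, I would rewrite this as $ab(cI)N\subseteq L$ and apply the equivalence $(a)\Leftrightarrow(d)$ of Theorem \ref{t2.19} to the submodule $N$, taking the ideal there to be $cI$. This yields three cases: $abN\subseteq L$, or $a(cI)N=ac(IN)\subseteq L$, or $b(cI)N=bc(IN)\subseteq L$. The last two are already two of the three conclusions we want; in the first case $abN\subseteq L$ forces $ab(IN)=I(abN)\subseteq IL\subseteq L$ because $L$ is a submodule, which is the remaining conclusion. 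Hence $IN$ is a classical 2-absorbing second submodule of $M$.

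\medskip

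\emph{Part (b).} Here $(N:_RI)$ is to be read as $(N:_MI)=\{m\in M:Im\subseteq N\}$, a submodule that contains $N$ and is proper precisely because $I\not\subseteq(N:_RM)$, which is exactly the hypothesis. Let $a,b,c\in R$ and $m\in M$ with $abcm\in(N:_MI)$, i.e.\ $(xa)(b)(c)m\in N$ for every $x\in I$. For each such $x$, the classical 2-absorbing property of $N$ applied to the element $m$ with ring elements $xa,b,c$ gives $x(abm)\in N$, or $x(acm)\in N$, or $bcm\in N$. The third alternative does not involve $x$, so if it holds for even one $x$ we conclude $bcm\in N\subseteq(N:_MI)$ and are done; otherwise $I\subseteq(N:_Rabm)\cup(N:_Racm)$, and since an ideal contained in a union of two ideals is contained in one of them, either $abm\in(N:_MI)$ or $acm\in(N:_MI)$. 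Thus $(N:_MI)$ is a classical 2-absorbing submodule of $M$.

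\medskip

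\emph{Part (c).} First, $f^{-1}(\acute N)\neq 0$, since a nonzero $y\in\acute N\subseteq f(M)$ equals $f(x)$ for a nonzero $x$, and then $x\in f^{-1}(\acute N)$. As $f$ is a monomorphism it restricts to an isomorphism $M\to f(M)$, so $f(f^{-1}(\acute N))=\acute N$, $f^{-1}(f(L))=L$, and $f(L)$ is a completely irreducible submodule of $f(M)$ whenever $L$ is one of $M$ (for instance because $f(M)/f(L)\cong M/L$ is cocyclic). Now suppose $abc\,f^{-1}(\acute N)\subseteq L$ for $a,b,c\in R$ and $L$ completely irreducible in $M$. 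Applying $f$ gives $abc\,\acute N\subseteq f(L)$; since $\acute N$ is classical 2-absorbing second in $f(M)$ and $f(L)$ is completely irreducible there, one of $ab\acute N$, $ac\acute N$, $bc\acute N$ lies in $f(L)$, and pulling that containment back through $f$ (using that $f(S)\subseteq f(L)$ forces $S\subseteq f^{-1}(f(L))=L$) puts the matching one of $ab\,f^{-1}(\acute N)$, $ac\,f^{-1}(\acute N)$, $bc\,f^{-1}(\acute N)$ inside $L$, so $f^{-1}(\acute N)$ is a classical 2-absorbing second submodule of $M$. Across the three parts the only delicate point is in (b): one must resist applying the classical 2-absorbing property to $abc(xm)\in N$ once for each $x\in I$ separately, since that would give only $I\subseteq(N:_Rabm)\cup(N:_Racm)\cup(N:_Rbcm)$, a union of three ideals that need not reduce to one; absorbing $x$ into the factor $a$ beforehand collapses this to the two-ideal case where the union-reduction applies. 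Everything else is routine bookkeeping — the regroupings in (a), and the check in (c) that a monomorphism carries completely irreducible submodules to completely irreducible submodules in both directions.
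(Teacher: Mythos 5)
Your proof is correct, and for parts (a) and (c) it is essentially the paper's argument: in (a) the paper also reduces to Theorem \ref{t2.19} $(a)\Rightarrow(d)$, merely grouping the factors as $a$, $c$, and the ideal $bI$ instead of your $a$, $b$, and $cI$ (both groupings work, and both handle the leftover case by $IN\subseteq N$); in (c) the paper runs the identical pullback argument, citing \cite[3.14]{AF16} for the fact that $f(L)$ is completely irreducible in $f(M)$ where you instead verify it directly via $f(M)/f(L)\cong M/L$ being cocyclic. The genuine divergence is in (b): the paper gives no argument at all beyond ``use the technique of part (a) and apply \cite[Theorem 2]{mto16}'', i.e.\ it defers to the external characterization of classical 2-absorbing submodules, whereas you prove it from the bare definition. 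Your key move --- absorbing $x\in I$ into the factor $a$ so that the third alternative $bcm\in N$ is independent of $x$, leaving only a \emph{two}-ideal union $(N:_Rabm)\cup(N:_Racm)$ to which the standard reduction applies --- is exactly the point that makes the elementary route work, and your remark that the naive application (to the element $xm$) produces a three-ideal union requiring a $u$-ring hypothesis is well taken; it explains why the result holds over arbitrary commutative rings even though unions of three ideals appear elsewhere in the paper only under the $u$-ring assumption. Your reading of $(N:_RI)$ as $(N:_MI)=\{m\in M: Im\subseteq N\}$, with properness equivalent to $I\not\subseteq(N:_RM)$, is also the intended one.
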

\begin{proof}
(a) Let $I$ be an ideal of $R$ with $I \not \subseteq Ann_R(N)$, $a, b, c \in R$, $L$ be a completely irreducible submodule of $M$, and $abcIN\subseteq L$. Then $acN \subseteq L$ or $cbIN \subseteq L$ or $abIN=0$
by Theorem \ref{t2.19}$(a) \Rightarrow (d)$. If $cbIN \subseteq L$ or $abIN=0$, then we are done.
If $acN \subseteq L$, then $acIN \subseteq acN$ implies that $acIN \subseteq L$, as needed.
Since  $I \not \subseteq Ann_R(N)$, we have $IN$ is a non-zero submodule of $M$.

(b) Use the technique of part (a) and apply \cite[Theorem 2]{mto16}.

(c) If $f^{-1}(\acute{N})=0$, then $f(M) \cap \acute{N}=ff^{-1}(\acute{N})=f(0)=0$. Thus $\acute{N}=0$, a contradiction. Therefore, $f^{-1}(\acute{N})\not=0$. Now let $a, b, c \in R$, $L$ be a completely irreducible submodule of $M$, and $abcf^{-1}(\acute{N})\subseteq L$. Then
  $$
  abc\acute{N}=abc(f(M) \cap \acute{N})=abcff^{-1}(\acute{N})\subseteq f(L).
  $$
By \cite[3.14]{AF16}, $f(L)$ is a completely irreducible submodule of $f(M)$. Thus as $\acute{N}$ is a classical 2-absorbing second submodule, $ab\acute{N} \subseteq f(L)$ or $bc\acute{N} \subseteq f(L)$ or $ac\acute{N} \subseteq f(L)$. Therefore, $abf^{-1}(\acute{N}) \subseteq f^{-1}f(L)=L$ or $bcf^{-1}(\acute{N}) \subseteq f^{-1}f(L)=L$ or  $acf^{-1}(\acute{N}) \subseteq f^{-1}f(L)=L$, as desired.
\end{proof}

An $R$-module $M$ is said to be a \emph{multiplication module} if for every submodule $N$ of $M$ there exists an ideal $I$ of $R$ such that $N=IM$ \cite{Ba81}.

An $R$-module $M$ is said to be a \emph{comultiplication module} if for every submodule $N$ of $M$ there exists an ideal $I$ of $R$ such that $N=(0:_MI)$, equivalently, for each submodule $N$ of $M$, we have $N=(0:_MAnn_R(N))$ \cite{AF07}.

\begin{cor}\label{c11.1}
Let $M$ be an $R$-module. Then we have the following.
\begin{itemize}
\item [(a)] If $M$ is a multiplication classical 2-absorbing second $R$-module, then every non-zero submodule of $M$ is a classical 2-absorbing second submodule of $M$.
\item [(b)] If $M$ is a comultiplication module and the zero submodule of $M$ is a classical 2-absorbing submodule, then every proper submodule of $M$ is a classical 2-absorbing submodule of $M$.
\end{itemize}
\end{cor}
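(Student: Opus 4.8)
The plan is to obtain both assertions as quick consequences of Theorem~\ref{t1.1}, once the submodule in question is written in the form supplied by the (co)multiplication hypothesis.

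For part (a), I would start from an arbitrary non-zero submodule $N$ of $M$ and use the multiplication hypothesis to write $N=IM$ for some ideal $I$ of $R$. Since $N\neq 0$, we must have $I\not\subseteq Ann_R(M)$, for otherwise $N=IM=0$. By hypothesis $M$ is a classical 2-absorbing second submodule of itself, so Theorem~\ref{t1.1}(a), applied to this ideal $I$, shows that $N=IM$ is a classical 2-absorbing second submodule of $M$; as $N$ was arbitrary, we are done.

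For part (b), I would start from an arbitrary proper submodule $N$ of $M$ and use the comultiplication hypothesis to write $N=(0:_M Ann_R(N))$. Set $I=Ann_R(N)$. The one point that needs a little care is verifying the hypothesis $I\not\subseteq (0:_R M)$ required to invoke Theorem~\ref{t1.1}(b): since $Ann_R(M)\subseteq Ann_R(N)$ always holds, an inclusion $Ann_R(N)\subseteq Ann_R(M)$ would give $Ann_R(N)=Ann_R(M)$ and hence $N=(0:_M Ann_R(N))=(0:_M Ann_R(M))=M$, contradicting that $N$ is proper. Granting this, and since $(0)$ is a classical 2-absorbing submodule of $M$ by hypothesis, Theorem~\ref{t1.1}(b) with $I=Ann_R(N)$ yields that $(0:_M I)=N$ is a classical 2-absorbing submodule of $M$.

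I do not expect a genuine obstacle here: both parts are essentially an unwinding of definitions together with Theorem~\ref{t1.1}. The only subtlety is the annihilator computation in (b), which is precisely the place where the comultiplication property is genuinely used, namely to exclude the degenerate possibility $N=M$. (Here one reads $(N:_RI)$ in the statement of Theorem~\ref{t1.1}(b) as the submodule $(N:_MI)$ of $M$.)
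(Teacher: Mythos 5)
Your proposal is correct and follows exactly the route the paper intends: the paper's proof is simply the one-line remark that the corollary follows from Theorem~\ref{t1.1}(a) and (b), and you have supplied precisely the missing details (writing $N=IM$, resp.\ $N=(0:_M Ann_R(N))$, and checking the nondegeneracy hypotheses on the ideal $I$). No issues.
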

\begin{proof}
This follows from parts (a) and (b) of Lemma \ref{t1.1}.
\end{proof}

\begin{prop}\label{p2.21}
 Let $M$ be an $R$-module and $\{K_i\}_{i \in I}$ be a chain of classical
2-absorbing second submodules of $M$. Then $\sum_{i \in I}K_i$ is a classical 2-absorbing second submodule of $M$.
\end{prop}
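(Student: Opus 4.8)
The plan is to verify the defining condition of Definition \ref{d2.12} directly for the submodule $K:=\sum_{i\in I}K_i$. First I would record that $K$ is non-zero: the chain is non-empty, and each $K_i$, being a classical 2-absorbing second submodule, is non-zero, so $0\neq K_i\subseteq K$ for any fixed $i\in I$.

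Next, fix $a,b,c\in R$ and a completely irreducible submodule $L$ of $M$ with $abcK\subseteq L$, and argue by contradiction: assume that $abK\not\subseteq L$, $bcK\not\subseteq L$, and $acK\not\subseteq L$. Since $abK=\sum_{i\in I}abK_i$, the relation $abK\not\subseteq L$ forces some index $i_1\in I$ with $abK_{i_1}\not\subseteq L$; likewise choose $i_2\in I$ with $bcK_{i_2}\not\subseteq L$ and $i_3\in I$ with $acK_{i_3}\not\subseteq L$.

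The key step is that $\{K_{i_1},K_{i_2},K_{i_3}\}$ is a \emph{finite} subset of the chain $\{K_i\}_{i\in I}$, hence has a largest element, say $K_{i_0}$ with $i_0\in\{i_1,i_2,i_3\}$. Then $K_{i_1},K_{i_2},K_{i_3}\subseteq K_{i_0}$, so from $abK_{i_1}\subseteq abK_{i_0}$, $bcK_{i_2}\subseteq bcK_{i_0}$, and $acK_{i_3}\subseteq acK_{i_0}$ we obtain $abK_{i_0}\not\subseteq L$, $bcK_{i_0}\not\subseteq L$, and $acK_{i_0}\not\subseteq L$ all at once. On the other hand $abcK_{i_0}\subseteq abcK\subseteq L$, so applying the classical 2-absorbing second property of $K_{i_0}$ to $a,b,c$ and $L$ gives $abK_{i_0}\subseteq L$ or $bcK_{i_0}\subseteq L$ or $acK_{i_0}\subseteq L$, contradicting the previous sentence. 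Hence one of $abK\subseteq L$, $bcK\subseteq L$, $acK\subseteq L$ holds, and $K$ is a classical 2-absorbing second submodule of $M$.

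There is essentially no serious obstacle here; the only point requiring care is that the three witnessing indices produced for the three possible failures must be merged into a single index before the hypothesis on the individual $K_i$ can be invoked, and this is exactly where the chain condition enters (an arbitrary family of classical 2-absorbing second submodules would not suffice). One could instead run the same argument through characterization (d) of Theorem \ref{t2.19}, but the direct computation above seems the cleanest.
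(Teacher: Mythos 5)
Your proof is correct and uses essentially the same idea as the paper: both arguments exploit the chain condition to merge the finitely many witnessing indices for the failed inclusions into a single index $i_0$ and then apply the classical 2-absorbing second property of $K_{i_0}$. The only cosmetic difference is that you argue by contradiction from three failed inclusions, while the paper assumes two fail and directly deduces the third holds for all sufficiently large members of the chain.
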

\begin{proof}
Let $a, b, c \in R$, $L$ be a completely irreducible submodule of $M$, and $abc\sum_{i \in I}K_i \subseteq L$. Assume that $ab\sum_{i \in I}K_i \not \subseteq L$ and $ac\sum_{i \in I}K_i \not \subseteq L$. Then there are $m,n \in I$ where $abK_n \not \subseteq L$ and $acK_m \not \subseteq L$. Hence, for every $K_n \subseteq K_s$ and every $K_m \subseteq K_d$ we have that $abK_s \not \subseteq L$ and $acK_d \not \subseteq L$.  Therefore, for each submodule $K_h$ such that $K_n \subseteq K_h$ and $K_m \subseteq K_h$, we have $bcK_h \subseteq L$.  Hence $bc\sum_{i \in I}K_i \subseteq L$, as needed.
\end{proof}

\begin{defn}\label{d2.22}
We say that a classical 2-absorbing second submodule $N$ of an $R$-module $M$
is a \emph {maximal classical 2-absorbing second submodule} of a submodule
$K$ of $M$, if $N \subseteq K$ and there does not exist a classical 2-absorbing second submodule $T$ of $M$ such that $N \subset T \subset K$.
\end{defn}

\begin{lem}\label{l2.23}
 Let $M$ be an $R$-module. Then every classical 2-absorbing second submodule of $M$ is contained in a maximal classical 2-absorbing second submodule of $M$.
\end{lem}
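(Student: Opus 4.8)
The plan is a routine Zorn's Lemma argument, with Proposition \ref{p2.21} doing the real work of producing upper bounds for chains. First I would fix a classical 2-absorbing second submodule $N$ of $M$ and consider the set
$$\Sigma = \{\, T \mid T \text{ is a classical 2-absorbing second submodule of } M \text{ with } N \subseteq T \,\},$$
partially ordered by inclusion. This set is non-empty, since $N \in \Sigma$.

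Next I would verify the hypothesis of Zorn's Lemma. Let $\{K_i\}_{i \in I}$ be a non-empty chain in $\Sigma$. By Proposition \ref{p2.21}, $\sum_{i \in I}K_i$ is again a classical 2-absorbing second submodule of $M$, and since $I \neq \emptyset$ and each $K_i$ contains $N$, so does $\sum_{i \in I}K_i$; hence $\sum_{i \in I}K_i \in \Sigma$ and it is plainly an upper bound for the chain. Thus every chain in $\Sigma$ is bounded above in $\Sigma$, so $\Sigma$ has a maximal element $N^{*}$.

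Finally I would check that $N^{*}$ is a maximal classical 2-absorbing second submodule of $M$ in the sense of Definition \ref{d2.22} with $K = M$: we have $N \subseteq N^{*}$, and if some classical 2-absorbing second submodule $T$ of $M$ satisfied $N^{*} \subset T \subset M$, then $T$ would lie in $\Sigma$ and strictly contain $N^{*}$, contradicting maximality of $N^{*}$ in $\Sigma$. Hence $N^{*}$ is the desired maximal classical 2-absorbing second submodule containing $N$. There is essentially no obstacle here; the only non-bookkeeping point is the closure of $\Sigma$ under sums of chains, which is exactly Proposition \ref{p2.21} (and one should keep in mind the trivial empty-chain case, handled by the non-emptiness of $\Sigma$, i.e. $N$ itself bounds the empty chain).
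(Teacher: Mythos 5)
Your proof is correct and is exactly the argument the paper intends: the paper's proof is the one-line remark that the result follows from Zorn's Lemma together with Proposition \ref{p2.21}, which is precisely the Zorn argument you spelled out.
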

\begin{proof}
This is proved easily by using Zorn's Lemma and Proposition \ref{p2.21}.
\end{proof}

\begin{thm}\label{t2.24} Let $M$ be an Artinian $R$-module. Then
every non-zero submodule of $M$ has only a finite number of maximal
classical 2-absorbing second submodules.
\end{thm}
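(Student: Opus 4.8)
The plan is a minimal-counterexample argument driven by the descending chain condition on submodules. Let $\Sigma$ denote the set of all non-zero submodules of $M$ having infinitely many maximal classical 2-absorbing second submodules, and suppose toward a contradiction that $\Sigma \neq \emptyset$. Since $M$ is Artinian, $\Sigma$ has a minimal element $N$. I would first record the easy observation that $N$ cannot itself be a classical 2-absorbing second submodule of $M$: were it so, $N$ would be the only maximal classical 2-absorbing second submodule of $N$, contradicting $N \in \Sigma$. Hence, by Definition \ref{d2.12}, there are elements $a, b, c \in R$ and a completely irreducible submodule $L$ of $M$ with $abcN \subseteq L$ but $abN \not\subseteq L$, $bcN \not\subseteq L$, and $acN \not\subseteq L$.

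Next I would introduce the three submodules
\[
N_1 = N \cap (L :_M ab), \qquad N_2 = N \cap (L :_M bc), \qquad N_3 = N \cap (L :_M ac).
\]
Each is properly contained in $N$, precisely because $abN \not\subseteq L$, $bcN \not\subseteq L$, $acN \not\subseteq L$ respectively; so by minimality of $N$ in $\Sigma$ none of $N_1, N_2, N_3$ lies in $\Sigma$, i.e. each of them has only finitely many maximal classical 2-absorbing second submodules (vacuously so if some $N_i = 0$, since a classical 2-absorbing second submodule is by definition non-zero). The heart of the argument is then the claim that every classical 2-absorbing second submodule $T$ of $M$ with $T \subseteq N$ satisfies $T \subseteq N_1$ or $T \subseteq N_2$ or $T \subseteq N_3$: from $abcT \subseteq abcN \subseteq L$ together with the defining property of $T$ we obtain $abT \subseteq L$ or $bcT \subseteq L$ or $acT \subseteq L$, which is exactly the desired trichotomy.

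To conclude, I would fix an arbitrary maximal classical 2-absorbing second submodule $T$ of $N$; by the claim, $T \subseteq N_i$ for some $i$, and $T$ is then automatically a maximal classical 2-absorbing second submodule of $N_i$, since any classical 2-absorbing second submodule $S$ with $T \subset S \subseteq N_i$ would give $T \subset S \subset N$ (as $N_i \subset N$), contradicting the maximality of $T$ in $N$. Therefore the maximal classical 2-absorbing second submodules of $N$ are all found among those of $N_1$, $N_2$, $N_3$, of which there are finitely many altogether — contradicting $N \in \Sigma$. Hence $\Sigma = \emptyset$, which is the assertion.

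I expect no serious difficulty here: once the elements $a, b, c$ and the submodule $L$ are extracted from the failure of the classical 2-absorbing second property, the whole statement reduces to the descending chain condition via the three proper submodules $N_1, N_2, N_3$. The only points requiring a little care are the bookkeeping in the last step — that maximality in $N$ transfers to maximality in each $N_i$ — and the trivial boundary cases in which $N$ is already classical 2-absorbing second or some $N_i$ vanishes.
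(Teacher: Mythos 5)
Your proposal is correct and follows essentially the same route as the paper's own proof: a minimal counterexample chosen via the Artinian condition, the elements $a,b,c$ and the completely irreducible submodule $L$ extracted from the failure of the classical 2-absorbing second property, and the observation that every maximal classical 2-absorbing second submodule of $N$ lies in one of the three proper submodules $N\cap(L:_M ab)$, $N\cap(L:_M bc)$, $N\cap(L:_M ac)$, each having only finitely many such submodules by minimality. Your explicit verification that maximality in $N$ transfers to maximality in each $N_i$ is a point the paper glosses over, but the underlying argument is the same.
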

\begin{proof}
Suppose that there exists a non-zero submodule $N$ of $M$
such that it has an infinite number of maximal classical 2-absorbing second submodules. Let $S$ be a submodule of $M$ chosen minimal such that $S$ has an infinite number of maximal classical 2-absorbing second submodules because $M$ is an Artinian $R$-module. Then $S$ is not a classical 2-absorbing second submodule. Thus there exist $a, b, c \in R$ and a completely irreducible submodule $L$ of $M$
such that $abcS\subseteq L$ but $abS \not \subseteq L$, $acS \not \subseteq L$, and $bcS \not \subseteq L$. Let $V$ be a maximal classical 2-absorbing second submodule of $M$ contained in $S$. Then $abV \subseteq L$ or $acV \subseteq L$ or $bcV \subseteq L$. Thus $V \subseteq (L:_Mab)$  or $V \subseteq (L:_Mac)$ or $V \subseteq (L:_Mbc)$. Therefore, $V \subseteq (L:_Sab)$  or $V \subseteq (L:_Sac)$ or $V \subseteq (L:_Sbc)$. By the choice of $S$, the modules $(L:_Sab)$, $(L:_Sac)$, and $(L:_Sbc)$ have only finitely many maximal classical 2-absorbing second submodules.
Therefore, there is only a finite number of possibilities for the
module $S$, which is a contradiction.
\end{proof}

\section{Strongly classical 2-absorbing second submodules}
\begin{defn}\label{d52.12}
Let $N$ be a non-zero submodule of an $R$-module $M$. We say that $N$ is a \emph{strongly classical 2-absorbing second submodule of} $M$ if whenever $a, b, c \in R$, $L_1, L_2, L_3$ are  completely irreducible submodules of $M$, and $abcN\subseteq L_1 \cap L_2 \cap L_3$, then $abN\subseteq  L_1 \cap L_2 \cap L_3$ or $bcN\subseteq  L_1 \cap L_2 \cap L_3$ or $acN \subseteq  L_1 \cap L_2 \cap L_3$. We say $M$ is a \emph{strongly classical 2-absorbing second module} if $M$ is a strongly classical 2-absorbing second submodule of itself.
\end{defn}

Clearly every strongly classical 2-absorbing second submodule is a classical 2-absorbing second submodule.
\begin{co}
Let $M$ be an $R$-module. Is every classical 2-absorbing second submodule of $M$ a strongly classical 2-absorbing second submodule of $M$?
\end{co}

\begin{ex}
The $\Bbb Z$-module $\Bbb Z$ has no strongly classical 2-absorbing second submodule.
\end{ex}

\begin{thm}\label{t52.19}
Let $M$ be an $R$-module and $N$ be a non-zero submodule of $M$. Then the
following statements are equivalent:
\begin{itemize}
  \item [(a)] $N$ is strongly classical 2-absorbing second;
  \item [(b)] If $a, b, c \in R$, $K$ is a submodule of $M$, and $abcN\subseteq K$, then $abN\subseteq  K$ or $bcN\subseteq K$ or $acN \subseteq K$;
  \item [(c)] For every $a, b, c \in R$, $abcN=abN$ or $abcN=acN$ or $abcN=bcN$;
  \item [(d)] For every $a, b \in R$ and  submodule $K$ of $M$ with $abN \not \subseteq K$, $(K:_RabN)=(K:_RaN) \cup(K:_RbN)$;
  \item [(e)] For every $a, b \in R$ and  submodule $K$ of $M$ with $abN \not \subseteq K$, $(K:_RabN)=(K:_RaN)$ or $(K:_RabN)=(K:_RbN)$;
  \item [(f)] For every $a, b \in R$, every ideal $I$ of $R$, and submodule $K$ of $M$ with $abIN \subseteq K$, either $abN \subseteq K$ or $aIN \subseteq K$ or $bIN \subseteq K$;
  \item [(g)] For every $a \in R$, every ideal $I$ of $R$, and submodule $K$ of $M$ with $aIN \not \subseteq K$, $(K:_RaIN)=(K:_RIN)$ or $(K:_RaIN)=(K:_RaN)$;
  \item [(h)] For every $a \in R$, ideals $I, J$ of $R$, and submodule $K$ of $M$ with $aIJN \subseteq K$, either $aIN \subseteq K$ or $aJN \subseteq K$ or $IJN \subseteq K$;
  \item [(i)] For ideals $I, J$ of $R$, and submodule $K$ of $M$ with $IJN \not \subseteq K$, $(K:_RIJN)=(K:_RIN)$ or $(K:_RIJN)=(K:_RJN)$;
  \item [(j)] For ideals $I_1, I_2, I_3$ of $R$, and submodule $K$ of $M$ with $I_1I_2I_3N \subseteq K$, either $I_1I_2N \subseteq K$ or $I_1I_3N \subseteq K$ or $I_2I_3N \subseteq K$;
  \item [(k)] For each submodule $K$ of $M$ with $N \not \subseteq K$, $(K:_RN)$ is a 2-absorbing ideal of $R$.
\end{itemize}
\end{thm}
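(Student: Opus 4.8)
The plan is to prove the theorem by a single cyclic chain
$(a)\Rightarrow(b)\Rightarrow(c)\Rightarrow(d)\Rightarrow(e)\Rightarrow(f)\Rightarrow(g)\Rightarrow(h)\Rightarrow(i)\Rightarrow(j)\Rightarrow(k)\Rightarrow(a)$,
noticing first that most of it is free. The block
$(d)\Rightarrow(e)\Rightarrow(f)\Rightarrow(g)\Rightarrow(h)\Rightarrow(i)\Rightarrow(j)$
is precisely the block
$(b)\Rightarrow(c)\Rightarrow(d)\Rightarrow(e)\Rightarrow(f)\Rightarrow(g)\Rightarrow(h)$
of the proof of Theorem~\ref{t2.19}, with the completely irreducible submodule $L$ replaced throughout by an arbitrary submodule $K$ of $M$; those arguments only use that $RK\subseteq K$ and that an ideal which is a union of two ideals coincides with one of them, so they carry over verbatim. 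Similarly $(j)\Rightarrow(k)$ (indeed $(j)\Leftrightarrow(k)$) is the routine translation of $(h)\Leftrightarrow(i)$ of Theorem~\ref{t2.19}, using the standard fact that if a product $I_1I_2I_3$ of ideals of $R$ lies in a $2$-absorbing ideal then $I_1I_2$, $I_1I_3$, or $I_2I_3$ does \cite{Ba07}. Thus the steps that actually require an argument are $(a)\Rightarrow(b)$, $(b)\Rightarrow(c)$, $(c)\Rightarrow(d)$, and the closing $(k)\Rightarrow(a)$.

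The one step with real content is $(a)\Rightarrow(b)$, which passes from three completely irreducible submodules to an arbitrary submodule. Suppose $a,b,c\in R$ and $K$ is a submodule of $M$ with $abcN\subseteq K$, and assume for contradiction that none of $abN$, $bcN$, $acN$ is contained in $K$. Applying the contrapositive of Remark~\ref{r2.1} three times, choose completely irreducible submodules $L_1,L_2,L_3$ of $M$ with $K\subseteq L_i$ for $i=1,2,3$, but $abN\not\subseteq L_1$, $bcN\not\subseteq L_2$, and $acN\not\subseteq L_3$. Then $abcN\subseteq K\subseteq L_1\cap L_2\cap L_3$, so $(a)$ forces $abN$, $bcN$, or $acN$ into $L_1\cap L_2\cap L_3$; since $L_1\cap L_2\cap L_3$ is contained in each of $L_1,L_2,L_3$, each alternative contradicts the choice of the $L_i$. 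Hence one of $abN$, $bcN$, $acN$ lies in $K$, which is $(b)$. (The converse $(b)\Rightarrow(a)$ is immediate, $L_1\cap L_2\cap L_3$ being a submodule.)

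Finally, for $(b)\Rightarrow(c)$ observe that, $N$ being a submodule, $cN\subseteq N$, so $abcN\subseteq abN$ and likewise $abcN\subseteq acN$ and $abcN\subseteq bcN$; applying $(b)$ with $K=abcN$ gives $abN\subseteq abcN$, $bcN\subseteq abcN$, or $acN\subseteq abcN$, and together with these reverse inclusions this yields the required equality. For $(c)\Rightarrow(d)$ (with $abN\not\subseteq K$): $(K:_RaN)\cup(K:_RbN)\subseteq(K:_RabN)$ holds because $RK\subseteq K$, and if $t\in(K:_RabN)$ then $abtN\subseteq K$, so $(c)$ applied to $a,b,t$ gives $abtN\in\{abN,atN,btN\}$; the value $abN$ is excluded, so $atN\subseteq K$ or $btN\subseteq K$, i.e.\ $t\in(K:_RaN)\cup(K:_RbN)$, and a union of two ideals equals one of them. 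And for $(k)\Rightarrow(a)$: given completely irreducible $L_1,L_2,L_3$ with $abcN\subseteq K:=L_1\cap L_2\cap L_3$, either $N\subseteq K$, whence $abN\subseteq N\subseteq K$, or $(K:_RN)$ is $2$-absorbing and contains $abc$, so one of $ab,ac,bc$ lies in $(K:_RN)$, i.e.\ one of $abN,acN,bcN$ lies in $K$. This closes the cycle. I do not anticipate a real obstacle: the only step that needs an idea is $(a)\Rightarrow(b)$, and there the work is done entirely by Remark~\ref{r2.1}.
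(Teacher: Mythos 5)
Your proposal is correct and follows essentially the same route as the paper: the key step $(a)\Rightarrow(b)$ via Remark \ref{r2.1} with three completely irreducible submodules $L_1,L_2,L_3$ containing $K$, then $(b)\Rightarrow(c)\Rightarrow(d)$ and the routine chain through $(j)$, with the link to 2-absorbing ideals handling $(k)$. Your only deviation—closing the cycle through $(k)\Rightarrow(a)$ instead of the paper's trivial $(j)\Rightarrow(a)$ plus separate $(j)\Leftrightarrow(k)$—is a harmless reorganization of the easy parts.
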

\begin{proof}
$(a) \Rightarrow (b)$
Let $a, b, c \in R$, $K$ is a submodule of $M$, and $abcN\subseteq K$. Assume on the contrary that $abN\not \subseteq  K$, $bcN\not \subseteq K$, and $acN \not \subseteq K$. Then there exist completely irreducible submodules $L_1, L_2, L_3$ of $M$ such that $K$ is a submodule of them but $abN\not \subseteq  L_1$, $bcN\not \subseteq L_2$, and $acN \not \subseteq L_3$. Now we have $abcN\subseteq L_1 \cap L_2 \cap L_3$. Thus by part (a), $abN\subseteq L_1 \cap L_2 \cap L_3$ or $bcN\subseteq L_1 \cap L_2 \cap L_3$ or $acN\subseteq L_1 \cap L_2 \cap L_3$. Therefore, $abN \subseteq L_1$ or $bcN \subseteq L_2$ or $acN \subseteq L_3$ which are contradictions.

$(b) \Rightarrow (c)$
Let $a, b, c \in R$. Then $abcN \subseteq abcN$ implies that $abN \subseteq abcN$ or $bcN \subseteq abcN$ or $acN \subseteq abcN$ by part (b). Thus $abN=abcN$ or $bcN=abcN$ or $acN=abcN$ because the reverse inclusions are clear..

$(c) \Rightarrow (d)$
Let $t \in (K:_RabN)$. Then $tabN \subseteq K$. Since $abN \not \subseteq K$, $atN \subseteq K$ or $btN \subseteq K$ as needed.

$(d) \Rightarrow (e)$
This follows from the fact that if an ideal is the union of two ideals, then it is equal to one of them.

$(e) \Rightarrow (f)$
Let for some $a, b \in R$, an ideal $I$ of $R$, and submodule $K$ of $M$, $abIN \subseteq K$. Then $I \subseteq  (K:_RabN)$. If $abN \subseteq K$, then we are done. Assume that $abN \not \subseteq K$. Then by part (d), $I \subseteq  (K:_RbN)$ or $I \subseteq  (K:_RaN)$  as desired.

$(g) \Rightarrow (h)\Rightarrow (i) \Rightarrow (h)\Rightarrow (j)$
Have proofs similar to that of the previous implications.

$(j) \Rightarrow (a)$
 Trivial.

$(j)\Leftrightarrow (k)$
This is straightforward.
\end{proof}

Let $N$ be a submodule of an $R$-module $M$. Then Theorem \ref{t52.19} $(a) \Leftrightarrow (c)$ shows that $N$ is a strongly classical 2-absorbing second submodule of $M$ if and only if $N$ is a  strongly classical 2-absorbing second module.

\begin{cor}\label{c1.6}
Let $N$ be a strongly classical 2-absorbing second submodule of an $R$-module $M$ and $I$ be an ideal of $R$. Then  $I^nN=I^{n+1}N$, for all $n \geq 2$.
\end{cor}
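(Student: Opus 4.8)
The plan is to reduce the statement to the single‑element case $I = (a)$, which is exactly Proposition~\ref{c11.6}(a), and then lift that to arbitrary ideals by exploiting the characterization of strongly classical 2‑absorbing second submodules in terms of ideals, namely Theorem~\ref{t52.19}$(a)\Rightarrow(h)$ and $(a)\Rightarrow(j)$. First I would observe that it suffices to prove $I^2N = I^3N$, since then multiplying by $I^{n-2}$ gives $I^nN = I^{n+1}N$ for all $n \ge 2$; the inclusion $I^{n+1}N \subseteq I^nN$ is automatic, so the work is the reverse inclusion $I^2N \subseteq I^3N$.

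To get $I^2N \subseteq I^3N$ I would use Remark~\ref{r2.1}: let $L$ be a completely irreducible submodule of $M$ with $I^3N \subseteq L$, and aim to show $I^2N \subseteq L$. Writing $I^3N = III N \subseteq L$ and applying Theorem~\ref{t52.19}$(a)\Rightarrow(h)$ with $a$ replaced by... — here I need to be a little careful, since (h) is stated for $aIJN$ with a single ring element $a$, while I have three copies of the ideal $I$. The cleaner route is to invoke Theorem~\ref{t52.19}$(a)\Rightarrow(j)$ directly with $I_1 = I_2 = I_3 = I$: from $I_1I_2I_3N \subseteq L$ we conclude $I_1I_2N \subseteq L$ or $I_1I_3N \subseteq L$ or $I_2I_3N \subseteq L$, and in every case this says $I^2N \subseteq L$. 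Since $L$ was an arbitrary completely irreducible submodule containing $I^3N$, Remark~\ref{r2.1} yields $I^2N \subseteq I^3N$, hence equality.

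With $I^2N = I^3N$ established, the inductive step is purely formal: assuming $I^nN = I^{n+1}N$ for some $n \ge 2$, apply $I$ to both sides to get $I^{n+1}N = I^{n+2}N$. So the corollary follows. I expect the only genuine subtlety to be the bookkeeping in the base case — making sure that the ideal‑version characterization (part (j) of Theorem~\ref{t52.19}) is quoted rather than trying to run the three‑element argument of Definition~\ref{d52.12} with repeated elements, and remembering that Remark~\ref{r2.1} is what converts a statement about all completely irreducible $L \supseteq I^3N$ into the module inclusion $I^2N \subseteq I^3N$. Everything after that is trivial, so no further difficulty is anticipated.
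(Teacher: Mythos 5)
Your proof is correct and follows essentially the same route as the paper, which simply reduces to showing $I^2N=I^3N$ and cites Theorem \ref{t52.19} (in effect part (j) with $I_1=I_2=I_3=I$). Your detour through completely irreducible submodules and Remark \ref{r2.1} is harmless but unnecessary, since part (j) (or (b)) of Theorem \ref{t52.19} applies to arbitrary submodules, so you may take $K=I^3N$ directly and conclude $I^2N\subseteq I^3N$ at once.
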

\begin{proof}
It is enough to show that $I^2N=I^3N$. By Theorem \ref{t52.19}, $I^2N=I^3N$.
\end{proof}

\begin{ex}\label{e59.1}
Clearly every strongly 2-absorbing second submodule is a strongly classical 2-absorbing second submodule. But the converse is not true in general. For example, consider $M=\Bbb Z_6 \oplus \Bbb Q$ as a $\Bbb Z$-module. Then $M$ is a strongly classical 2-absorbing second module. But $M$ is not a strongly 2-absorbing second module.
\end{ex}

A non-zero submodule $N$ of an $R$-module $M$ is said to be a \emph{weakly second submodule} of $M$ if $rsN\subseteq K$, where $r,s \in R$ and $K$ is a submodule of
$M$, implies either $rN\subseteq K$ or $sN\subseteq K$ \cite{AF101}.

\begin{prop}\label{p52.20}
Let $M$ be an $R$-module. Then we have the following.
 \begin{itemize}
   \item [(a)] If $M$ is a comultiplication $R$-module and $N$ is a strongly classical 2-absorbing second submodule of $M$, then $N$ is a strongly  2-absorbing second submodule of $M$.
   \item [(b)] If $N_1$, $N_2$ are weakly second submodules
               of $M$,  then $N_1+N_2$ is a strongly classical 2-absorbing second submodule of $M$.
   \item [(c)] If $N$ is a strongly classical 2-absorbing second submodule of
               $M$, then $IN$ is a strongly classical 2-absorbing second submodule of $M$ for all ideals $I$ of $R$ with $I \not \subseteq Ann_R(N)$.
   \item [(d)] If $M$ is a multiplication strongly classical 2-absorbing second $R$-module, then every non-zero submodule of $M$ is a classical 2-absorbing second submodule of $M$.
   \item [(e)] If $M$ is a strongly classical 2-absorbing second $R$-module, then every non-zero homomorphic image of $M$ is a classical 2-absorbing second $R$-module.
 \end{itemize}
 \end{prop}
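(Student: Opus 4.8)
The plan is to treat the five parts separately, in each case translating the desired statement into one of the equivalent conditions of Theorem~\ref{t52.19}.

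\emph{Part (a).} First I would apply Theorem~\ref{t52.19}, implication $(a)\Rightarrow(k)$, to the submodule $K=(0)$ (legitimate since $N\neq(0)$, hence $N\not\subseteq(0)$): this shows that $A:=Ann_R(N)=((0):_RN)$ is a $2$-absorbing ideal of $R$. Now let $a,b\in R$ and let $K$ be a submodule of $M$ with $abN\subseteq K$ and $ab\notin Ann_R(N)$; I must produce $aN\subseteq K$ or $bN\subseteq K$. Since $A$ is $2$-absorbing and $ab\notin A$, the usual argument (for $t$ with $tab\in A$, $2$-absorbingness and $ab\notin A$ force $ta\in A$ or $tb\in A$, and the reverse inclusions are clear) gives $(A:_Rab)=(A:_Ra)\cup(A:_Rb)$, and an ideal that is a union of two ideals equals one of them, say $(A:_Rab)=(A:_Ra)$. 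Since $Ann_R(rN)=(A:_Rr)$ for every $r\in R$, this says $Ann_R(abN)=Ann_R(aN)$; now the comultiplication hypothesis, applied to the two submodules $abN$ and $aN$, yields $abN=((0):_MAnn_R(abN))=((0):_MAnn_R(aN))=aN$, so $aN=abN\subseteq K$. Thus $N$ is strongly $2$-absorbing second. The one delicate point is the passage between annihilator ideals and submodules, which is exactly where comultiplication is used.

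\emph{Part (b).} The heart of the matter is the claim: \emph{if $L$ is weakly second and $abcL\subseteq K$, then at least two of $abL\subseteq K$, $acL\subseteq K$, $bcL\subseteq K$ hold}. I would prove this by a short case analysis, applying the weakly second property to each of $(ab)(c)L\subseteq K$, $(ac)(b)L\subseteq K$, $(bc)(a)L\subseteq K$: if one of $aL,bL,cL$ lies in $K$, then (using $xyL\subseteq xL$, which holds since $yL\subseteq L$) the two pairs containing that element give their inclusions; if none of $aL,bL,cL$ lies in $K$, then all three pair-inclusions hold. Applying the claim to $N_1$ and to $N_2$ (noting $abc(N_1+N_2)\subseteq K$ is equivalent to $abcN_1\subseteq K$ and $abcN_2\subseteq K$) produces, for each $j\in\{1,2\}$, a set of at least two ``good'' pairs among $\{ab,ac,bc\}$; since two subsets of a three-element set each of size at least two must intersect, some pair $P$ is good for both, so $P(N_1+N_2)\subseteq K$. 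Theorem~\ref{t52.19}, $(b)\Rightarrow(a)$, then finishes it. This pigeonhole step is the main (and essentially only) obstacle.

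\emph{Parts (c), (d), (e).} For (c): $IN\neq0$ since $I\not\subseteq Ann_R(N)$; given $abc(IN)\subseteq K$, rewrite it as $ab(cI)N\subseteq K$ and apply Theorem~\ref{t52.19}, $(a)\Rightarrow(f)$, to the ideal $cI$, obtaining $abN\subseteq K$ or $ac(IN)\subseteq K$ or $bc(IN)\subseteq K$; in the first case $ab(IN)=I(abN)\subseteq IK\subseteq K$, so one of $ab(IN),ac(IN),bc(IN)$ lies in $K$, and Theorem~\ref{t52.19}, $(b)\Rightarrow(a)$, applied to $IN$ gives the conclusion. Part (d) is then immediate from (c) with $N=M$: a non-zero submodule of a multiplication module is $IM$ with $I\not\subseteq Ann_R(M)$, hence strongly classical $2$-absorbing second by (c), hence classical $2$-absorbing second by Example~\ref{e52.2}. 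For (e): write a non-zero homomorphic image as $\pi\colon M\to M/P$ with $P\neq M$; if $L'$ is a completely irreducible submodule of $M/P$ with $abc(M/P)\subseteq L'$, then $abcM\subseteq\pi^{-1}(L')$, so Theorem~\ref{t52.19}, $(a)\Rightarrow(b)$, applied to $M$ and the submodule $\pi^{-1}(L')$ gives one of $abM,bcM,acM\subseteq\pi^{-1}(L')$, and applying the surjection $\pi$ yields the corresponding inclusion in $L'$, so $M/P$ is classical $2$-absorbing second.
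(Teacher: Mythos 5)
Your proposal is correct in all five parts, and for (a), (c), (d), (e) it follows essentially the same route as the paper: (a) is proved there by invoking Theorem~\ref{t52.19} $(a)\Rightarrow(k)$ to get that $Ann_R(N)$ is $2$-absorbing and then citing an external result (\cite[3.10]{AF16}) for the comultiplication step, which your argument simply inlines (correctly, via $Ann_R(rN)=(Ann_R(N):_Rr)$ and $abN=(0:_MAnn_R(abN))$); (c) is the paper's ``technique of Theorem~\ref{t1.1}(a)'' transposed to arbitrary submodules; (d) and (e) are dismissed there as immediate, exactly as you argue. The only genuinely different step is (b): the paper works with characterization $(c)$ of Theorem~\ref{t52.19}, observing that for a weakly second $N_i$ the module $abcN_i$ equals $x_iN_i$ for some $x_i\in\{a,b,c\}$, whence $abcN_i=x_iy N_i=abcN_i$ forces $abc(N_1+N_2)$ to equal $P(N_1+N_2)$ for a common pair $P$; your version instead works with characterization $(b)$, showing that for each $N_i$ at least two of the three pair-inclusions into a given $K$ hold and then intersecting two $2$-element subsets of a $3$-element set. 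The two arguments are the same pigeonhole in different clothing; yours is more explicit about why a \emph{common} pair exists (the paper's ``we may assume $abcN_1=aN_1$ \dots\ likewise $abcN_2=bN_2$'' silently covers all nine cases), at the cost of a slightly longer case analysis. No gaps.
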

\begin{proof}
(a) By Theorem \ref{t52.19} $(a) \Rightarrow (k)$, $Ann_R(N)$ is a 2-absorbing ideal of $R$. Now the result follows from \cite[3.10]{AF16}.

(b) Let $N_1$, $N_2$ be weakly second submodules of $M$ and  $a, b, c \in R $. Since $N_1$ is a weakly second submodule, we may assume that $abcN_1=aN_1$. Likewise, assume that $abcN_2=bN_2$. Hence $abc(N_1+N_2)=ab(N_1+N_2)$ which implies $N_1+N_2$ is a classical 2-absorbing second submodule by Theorem \ref{t52.19} $(c) \Rightarrow (a)$.

(c) Use the technique of the proof of Theorem \ref{t1.1} (a).

(d) This follows from part (c).

(e) This is straightforward.
\end{proof}

For a submodule $N$ of an $R$-module $M$ the the \emph{second radical} (or \emph{second socle}) of $N$ is defined  as the sum of all second submodules of $M$ contained in $N$ and it is denoted by $sec(N)$ (or $soc(N)$). In case $N$ does not contain any second submodule, the second radical of $N$ is defined to be $(0)$ (see \cite{CAS13} and \cite{AF11}).
\begin{thm}\label{p111.11}
Let $M$ be a finitely generated comultiplication $R$-module. If $N$ is a strongly classical 2-absorbing second submodule of $M$, then $sec(N)$ s a strongly 2-absorbing second submodule of $M$.
\end{thm}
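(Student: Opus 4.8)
The plan is to reduce the assertion to a computation of the annihilator of $sec(N)$. Since $N$ is a nonzero strongly classical $2$-absorbing second submodule, Theorem \ref{t52.19} $(a) \Rightarrow (k)$ applied with $K=(0)$ shows that $Ann_R(N)$ is a $2$-absorbing ideal of $R$; hence, by \cite{Ba07}, $\sqrt{Ann_R(N)}$ is again a $2$-absorbing ideal, in fact the intersection of at most two prime ideals. On the other hand, by \cite[3.10]{AF16} a nonzero submodule of the comultiplication module $M$ is strongly $2$-absorbing second precisely when its annihilator is a $2$-absorbing ideal. Thus it suffices to prove that $sec(N)\neq(0)$ and
\[
Ann_R\big(sec(N)\big)=\sqrt{Ann_R(N)}.
\]

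By definition $sec(N)$ is the sum of all second submodules of $M$ contained in $N$. If $S\subseteq N$ is second, then $Ann_R(S)$ is a prime ideal containing $Ann_R(N)$, so $Ann_R(sec(N))=\bigcap_{S}Ann_R(S)$ is an intersection of prime ideals each containing $Ann_R(N)$; hence $Ann_R(sec(N))\supseteq\sqrt{Ann_R(N)}$. For the reverse inclusion I would fix a minimal prime $\mathfrak p$ over $Ann_R(N)$ and show that $(0:_M\mathfrak p)$ is a nonzero second submodule of $M$ with $Ann_R\big((0:_M\mathfrak p)\big)=\mathfrak p$; since $\mathfrak p\supseteq Ann_R(N)$, the comultiplication property gives $(0:_M\mathfrak p)\subseteq(0:_M Ann_R(N))=N$, so $(0:_M\mathfrak p)$ is one of the summands defining $sec(N)$ and therefore $Ann_R(sec(N))\subseteq\mathfrak p$. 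Intersecting over the minimal primes of $Ann_R(N)$ yields $Ann_R(sec(N))\subseteq\sqrt{Ann_R(N)}$; in particular $sec(N)\neq(0)$ (as $Ann_R(N)\neq R$).

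The step I expect to carry the real weight is the claim that, for a prime $\mathfrak p$ in the support of a finitely generated comultiplication module, $(0:_M\mathfrak p)$ is a nonzero second submodule with annihilator exactly $\mathfrak p$. Here the comultiplication hypothesis, together with the elementary identity $Ann_R(aK)=(Ann_R(K):_Ra)$, reduces the statement ``$(0:_M\mathfrak p)$ is second'' to ``$Ann_R\big((0:_M\mathfrak p)\big)$ is prime'', while finite generation is what forces $(0:_M\mathfrak p)\neq(0)$ and pins its annihilator down to $\mathfrak p$ (finitely generated comultiplication modules are supported only at maximal ideals, so the relevant primes $\mathfrak p$ are maximal and $Ann_R\big((0:_M\mathfrak p)\big)$, being a proper ideal containing $\mathfrak p$, equals $\mathfrak p$). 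It may be cleanest to record this as a preliminary lemma, or to quote the corresponding statements from \cite{AF11} and \cite{CAS13}. Granting it, the previous paragraph gives $Ann_R(sec(N))=\sqrt{Ann_R(N)}$, a $2$-absorbing ideal, and $sec(N)\neq(0)$, so $sec(N)$ is a strongly $2$-absorbing second submodule of $M$ by \cite[3.10]{AF16}.
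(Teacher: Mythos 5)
Your overall strategy is the same as the paper's: use Theorem \ref{t52.19} $(a)\Rightarrow (k)$ (this is exactly how Proposition \ref{p52.20}(a) is proved) to get that $Ann_R(N)$ is 2-absorbing, pass to $\sqrt{Ann_R(N)}$ via \cite{Ba07}, identify this radical with $Ann_R(sec(N))$, and finish with the comultiplication criterion \cite[3.10]{AF16}. The paper handles the middle step by quoting \cite[2.12]{AF25}, which states precisely that $Ann_R(sec(N))=\sqrt{Ann_R(N)}$ for a finitely generated comultiplication module; you instead sketch a proof of this identity, and that is where the attempt is incomplete. Your easy inclusion $\sqrt{Ann_R(N)}\subseteq Ann_R(sec(N))$ is fine, as is the reduction, via $Ann_R(aK)=(Ann_R(K):_Ra)$ and the comultiplication property, of ``$(0:_M\mathfrak{p})$ is second'' to ``$Ann_R((0:_M\mathfrak{p}))$ is prime.'' But the load-bearing claim --- that every minimal prime $\mathfrak{p}$ over $Ann_R(N)$ is maximal (``finitely generated comultiplication modules are supported only at maximal ideals'') and that $(0:_M\mathfrak{p})\neq (0)$ with annihilator exactly $\mathfrak{p}$ --- is asserted rather than proved. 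The support statement is itself a nontrivial theorem requiring its own argument or a precise citation, and even granting that $\mathfrak{p}$ is maximal, $(0:_M\mathfrak{p})\neq(0)$ does not follow from maximality alone: you must actually produce a nonzero element of $N$ killed by $\mathfrak{p}$, which again uses finite generation and the comultiplication hypothesis in an essential way. The vague appeal to ``the corresponding statements from \cite{AF11} and \cite{CAS13}'' does not close this; the statement you need is exactly \cite[2.12]{AF25}.

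So the verdict is: same skeleton as the paper, with a genuine gap confined to the one step where you depart from it. The immediate fix is to cite \cite[2.12]{AF25} for $Ann_R(sec(N))=\sqrt{Ann_R(N)}$, as the paper does, which also guarantees $sec(N)\neq (0)$ (a point you were right to worry about and which the paper leaves implicit in its citations). If you prefer a self-contained treatment, isolate and prove the lemma that for a finitely generated comultiplication module $M$ and a minimal prime $\mathfrak{p}$ over $Ann_R(N)$, the submodule $(0:_M\mathfrak{p})$ is nonzero and second with $Ann_R((0:_M\mathfrak{p}))=\mathfrak{p}$; as it stands, that lemma is the missing content of your proof.
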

\begin{proof}
Let $N$ be a strongly classical 2-absorbing second submodule of $M$. By Proposition \ref{p52.20} (a), $Ann_R(N)$ is a 2-absorbing ideal of $R$. Thus by \cite[2.1]{Ba07}, $\sqrt{Ann_R(N)}$ is a 2-absorbing ideal of $R$. By \cite[2.12]{AF25}, $Ann_R(sec(N))=\sqrt{Ann_R(N)}$. Therefore, $Ann_R(sec(N))$ is a 2-absorbing ideal of $R$. Now the result follows from \cite[3.10]{AF16}.
\end{proof}

The following examples show that the two concepts of classical 2-absorbing submodules and strongly classical 2-absorbing second submodules are different in general.
\begin{ex} \label{e75.1}
The submodule $2\Bbb Z$ of the $\Bbb Z$-module $\Bbb Z$ is a classical 2-absorbing submodule which is not a strongly classical 2-absorbing second module.
\end{ex}

\begin{ex} \label{e57.2}
The submodule $\langle 1/p+\Bbb Z\rangle$ of the $\Bbb Z$-module $\Bbb Z_{p^\infty}$ is a a strongly classical 2-absorbing second module which is not a classical 2-absorbing submodule of $\Bbb Z_{p^\infty}$.
\end{ex}

A commutative ring $R$ is said to be a \textit{$u$-ring} provided
$R$ has the property that an ideal contained in a finite union of ideals must be
contained in one of those ideals; and a \textit{$um$-ring} is a ring $R$ with the property that an $R$-module which is equal to a finite union of submodules must be equal to one of them \cite{QB75}.

In the following proposition, we investigate the relationships between strongly classical 2-absorbing second submodules and classical 2-absorbing submodules.
\begin{prop} \label{p3.5} Let $M$ be a non-zero $R$-module. Then we have the following.
\begin{itemize}
\item [(a)] If $M$ is a finitely generated strongly classical 2-absorbing second
$R$-module, then the zero submodule of $M$ is a classical 2-absorbing submodule.
\item [(b)] If $M$ is a multiplication strongly classical 2-absorbing second $R$-module, then the zero submodule of $M$ is a classical 2-absorbing submodule.
\item [(c)] Let R be a $um$-ring. If $M$ is a Artinian $R$-module and the zero submodule of $M$ is a classical 2-absorbing submodule, then $M$ is a strongly classical 2-absorbing second $R$-module.
\item [(d)] Let R be a $um$-ring. If $M$ is a comultiplication $R$-module and the zero submodule of $M$ is a classical 2-absorbing submodule, then $M$ is a strongly classical 2-absorbing second $R$-module.
\end{itemize}
\end{prop}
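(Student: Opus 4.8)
The four parts split into two groups. For (a) and (b) I would use the equivalence $(a)\Leftrightarrow(c)$ of Theorem \ref{t52.19}: since $M$ is strongly classical 2-absorbing second, for every $a,b,c\in R$ one of $abcM=abM$, $abcM=acM$, $abcM=bcM$ holds. To see that $(0)$ is a classical 2-absorbing submodule, fix $a,b,c\in R$ and $m\in M$ with $abcm=0$; by symmetry assume $abcM=abM$. In case (a), $abM$ is finitely generated and $(cR)(abM)=c(abM)=abcM=abM$, so by the determinant trick (if a finitely generated module $N$ satisfies $IN=N$ for an ideal $I$, then $(1-r)N=0$ for some $r\in I$) there is $r\in R$ with $(1-cr)(abM)=0$; evaluating at $abm$ gives $abm=cr\,(abm)=r\,(abcm)=0$. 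In case (b), the multiplication property yields an ideal $J$ with $Rm=JM$; writing $m=\sum_i j_ix_i$ with $j_i\in J$, $x_i\in M$, using $abx_i\in abM=abcM$ to write $abx_i=abcy_i$, and noting $j_iy_i\in JM=Rm$ equals $r_im$, we get $abm=\sum_i j_i(abx_i)=abc\sum_i j_iy_i=abc\,(\sum_i r_i)\,m=(\sum_i r_i)(abcm)=0$. The remaining two cases are symmetric, so $(0)$ is a classical 2-absorbing submodule.

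For (c) and (d) I would verify condition (b) of Theorem \ref{t52.19}: given a proper submodule $K$ with $abcM\subseteq K$, show $abM\subseteq K$ or $acM\subseteq K$ or $bcM\subseteq K$. The plan rests on the reduction that \emph{every proper submodule $K$ of $M$ is a classical 2-absorbing submodule of $M$}. Granting this, for each $x\in M$ we have $abcx\in K$, hence $abx\in K$ or $acx\in K$ or $bcx\in K$; therefore $M=(K:_Mab)\cup(K:_Mac)\cup(K:_Mbc)$, and since $R$ is a $um$-ring, $M$ equals one of these three submodules, i.e. $abM\subseteq K$ or $acM\subseteq K$ or $bcM\subseteq K$. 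By Theorem \ref{t52.19}$(b)\Rightarrow(a)$, $M$ is then strongly classical 2-absorbing second. For (d) the reduction is immediate from Corollary \ref{c11.1}(b), since $M$ is a comultiplication module and $(0)$ is a classical 2-absorbing submodule.

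For (c) the reduction is the substance of the argument. I would first observe that the property ``$(0)$ is a classical 2-absorbing submodule'' passes to every submodule, and that for a proper $K$ and $x\in M\setminus K$ one has $abx\in K$ iff $abx\in Rx\cap K$, a proper submodule of the cyclic module $Rx$; thus it suffices to treat the cyclic case $M=Rx\cong R/I$, where $I:=Ann_R(x)$ is a 2-absorbing ideal and $R/I$ is an Artinian ring. The task becomes: every proper ideal $J$ of $R$ with $I\subseteq J$ is 2-absorbing, equivalently every proper submodule of $R/I$ is classical 2-absorbing. For this I would analyse an Artinian ring $A$ in which $(0)$ is a 2-absorbing ideal: $A$ can have at most two local factors (three orthogonal idempotents $e_1,e_2,e_3$ give $x=e_1+e_2,\ y=e_1+e_3,\ z=e_2+e_3$ with $xyz=0$ but $xy,xz,yz\neq0$), and a local factor must have square-zero maximal ideal (an element $w$ with $w^2\neq0$, or elements $a,b$ with $ab\neq0$ and $a^2=b^2=0$, again produces a violating triple), so $A$ is a field, a product of two fields, or local with square-zero maximal ideal; in each of these cases every proper quotient of $A$ is again a ring of the same type, hence also has $(0)$ 2-absorbing. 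This structural analysis — and being precise about exactly where the Artinian hypothesis is used — is the main obstacle; once it is in hand, (c) is finished by the $um$-ring argument above, and (d) is that argument together with Corollary \ref{c11.1}(b).
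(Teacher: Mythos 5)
Your arguments for (a) and (b) follow essentially the paper's route: both reduce via Theorem \ref{t52.19} to one of the equalities $abcM=abM$ (up to permuting $a,b,c$) and then extract $abm=0$, in (a) by the determinant trick (the paper cites Kaplansky's Theorem 76 to get $Ann_R(abM)+Rc=R$, which is the same device), and in (b) by the multiplication structure — the paper multiplies the identity $(0:_Mabc)=((0:_Mabc):_RM)M$ through by $ab$, while you element-chase through $Rm=JM$; these are interchangeable and both correct. For (c) and (d) you take a genuinely different path. The paper invokes the $um$-ring characterization of classical 2-absorbing submodules from \cite{mto16} to obtain $(0:_Mabc)=(0:_Mab)$ for some pair, and then upgrades this to $abcM=abM$: in (c) by noting that multiplication by $c$ is an injective, hence surjective, endomorphism of the Artinian module $M/(0:_Mab)$, and in (d) by a comultiplication computation; Theorem \ref{t52.19}(c)$\Rightarrow$(a) then finishes in three lines. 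You instead prove that \emph{every} proper submodule $K$ of $M$ is a classical 2-absorbing submodule and verify Theorem \ref{t52.19}(b) directly from $M=(K:_Mab)\cup(K:_Mac)\cup(K:_Mbc)$ and the $um$-ring property. For (d) your reduction via Corollary \ref{c11.1}(b) is immediate and arguably cleaner than the paper's computation. For (c) your reduction rests on the structure of Artinian rings whose zero ideal is 2-absorbing (at most two local factors; two factors forces two fields; one factor forces a square-zero maximal ideal), and the details you leave implicit do check out (a nonzero square-zero element exists in any nonzero maximal ideal of an Artinian local ring, and the triple $a,b,a+b$ handles the case $a^2=b^2=0$, $ab\neq 0$), but this is substantially heavier machinery than the paper's argument. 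Both routes are valid: the paper's buys brevity by outsourcing the key equivalence to \cite{mto16}, while yours is self-contained and makes explicit both where the $um$-ring hypothesis and where the Artinian hypothesis are really used.
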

\begin{proof}
(a) Let $a, b ,c \in R$, $m \in M$, and $abcm =0$. By Theorem \ref{t52.19}, we can
 assume that $abcM=acM$. Since $M$ is finitely generated, by using \cite[Theorem 76]{Kap11}, $Ann_R(abM)+Rc=R$. It follows that $(0:_Mabc)=(0:_Mab)$. This implies that $abm =0$, as needed.

(b) Let $a, b ,c \in R$, $m \in M$, and $abcm =0$. Then by Theorem \ref{t52.19}, we can
 assume that $abcM=acM$.  Thus
$$
0=abc((0:_Mabc):_RM)M=(((0:_Mabc):_RM)M)ab.
$$
Since $M$ is a multiplication module, $((0:_Mabc):_RM)M=(0:_Mabc)$.
Therefore, $(0:_Mabc)ab=0$.
It follows that $(0:_Mabc)\subseteq (0:_Mab)$. Thus $(0:_Mabc)= (0:_Mab)$ because the reverse inclusion is clear. Hence $abm =0$, as required.

(c) Let $a, b, c \in R$. Then by \cite[Theorem 4]{mto16}, we can assume that $(0:_Mabc)=(0:_Mab)$. Hence $(0:_{M/(0:_Mab)}c)=0$. Since $M$ is Artinian, it follows that $cM+(0:_Mab)=M$. Therefore, $abcM=abM$. Thus by Theorem \ref{t52.19} $(c) \Rightarrow (a)$, $M$
is a classical 2-absorbing second $R$-module.

(d) Let $a, b, c \in R$. Then by \cite[Theorem 4]{mto16}, we can assume that $(0:_Mabc)=(0:_Mab)$.
Since $M$ is a comultiplication $R$-module, this implies that
$$
M=((0:_Mabc):_MAnn_R(abcM)=((0:_Mab):_MAnn_R(abcM))=(abcM:_Mab).
$$
It follows that $abM\subseteq abcM$. Thus $abM=abcM$ because the reverse implication is clear and this completed the proof.
\end{proof}

\begin{prop}\label{p52.21}
 Let $M$ be an $R$-module and $\{K_i\}_{i \in I}$ be a chain of strongly classical
2-absorbing second submodules of $M$. Then $\sum_{i \in I}K_i$ is a strongly classical 2-absorbing second submodule of $M$.
\end{prop}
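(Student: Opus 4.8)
The plan is to mimic the proof of Proposition \ref{p2.21} almost verbatim, replacing completely irreducible submodules $L$ by arbitrary submodules $K$ and invoking the characterization in Theorem \ref{t52.19} $(a)\Leftrightarrow(b)$, which tells us that for strongly classical 2-absorbing second submodules the defining condition can be tested against \emph{all} submodules $K$ of $M$, not just intersections of completely irreducible ones. So first I would fix $a,b,c\in R$ and a submodule $K$ of $M$ with $abc\sum_{i\in I}K_i\subseteq K$, and aim to show one of $ab\sum_{i\in I}K_i\subseteq K$, $ac\sum_{i\in I}K_i\subseteq K$, $bc\sum_{i\in I}K_i\subseteq K$ holds.

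Next I would argue by contradiction: suppose $ab\sum_{i\in I}K_i\not\subseteq K$ and $ac\sum_{i\in I}K_i\not\subseteq K$. Since a sum over a chain of submodules has the property that a product $r\sum K_i\subseteq K$ iff $rK_i\subseteq K$ for all $i$ (one direction is clear; the other uses that any element of $\sum K_i$ lies in a single $K_i$ because the $K_i$ form a chain), the two failures give indices $n,m\in I$ with $abK_n\not\subseteq K$ and $acK_m\not\subseteq K$. Using the chain property again, for every $s\in I$ with $K_n\subseteq K_s$ we get $abK_s\not\subseteq K$, and for every $d\in I$ with $K_m\subseteq K_d$ we get $acK_d\not\subseteq K$. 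Now pick any $K_h$ in the chain containing both $K_n$ and $K_m$ (such an $h$ exists since $\{K_i\}$ is a chain and we can take the larger of $K_n,K_m$, or any common upper bound): then $abc K_h\subseteq abc\sum K_i\subseteq K$ while $abK_h\not\subseteq K$ and $acK_h\not\subseteq K$, so since $K_h$ is strongly classical 2-absorbing second (applying Theorem \ref{t52.19} $(a)\Rightarrow(b)$ to $K_h$) we must have $bcK_h\subseteq K$. This holds for every such $K_h$ in the cofinal part of the chain, hence $bcK_i\subseteq K$ for every $i\in I$ (any $K_i$ sits below some such $K_h$), and therefore $bc\sum_{i\in I}K_i\subseteq K$, the desired alternative.

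The only genuinely delicate point — and the one I would state carefully rather than wave at — is the interplay between arbitrary submodules $K$ and the chain: specifically the claim that $r\sum_{i\in I}K_i\subseteq K$ is equivalent to $rK_i\subseteq K$ for all $i$, and the ``cofinality'' claim that $bcK_h\subseteq K$ for all sufficiently large $h$ forces $bc\sum K_i\subseteq K$. Both follow from the elementary fact that every finite subset of $\bigcup_{i\in I}K_i$ lies in a single member of the chain, so $\sum_{i\in I}K_i=\bigcup_{i\in I}K_i$. Everything else is bookkeeping. I would also note explicitly that $\sum_{i\in I}K_i$ is non-zero (each $K_i$ is non-zero by definition of a strongly classical 2-absorbing second submodule), so it is a legitimate candidate. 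Then conclude by Theorem \ref{t52.19} $(b)\Rightarrow(a)$ that $\sum_{i\in I}K_i$ is strongly classical 2-absorbing second.
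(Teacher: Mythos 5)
Your proof is correct and follows essentially the same route as the paper, whose proof of this proposition simply says to reuse the technique of Proposition \ref{p2.21}: locate indices $n,m$ witnessing the two assumed failures, pass to common upper bounds $K_h$ in the chain to force $bcK_h\subseteq K$, and sum up. Your use of Theorem \ref{t52.19} $(a)\Leftrightarrow(b)$ to test against arbitrary submodules rather than intersections of three completely irreducible ones is a harmless (indeed clarifying) reformulation, and your explicit attention to $\sum_{i\in I}K_i=\bigcup_{i\in I}K_i$ and to the non-vanishing of the sum fills in details the paper leaves implicit.
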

\begin{proof}
Use the technique of Proposition \ref{p2.21}.
\end{proof}

\begin{defn}\label{d52.22}
We say that a strongly classical 2-absorbing second submodule $N$ of an $R$-module $M$
is a \emph {maximal strongly classical 2-absorbing second submodule} of a submodule
$K$ of $M$, if $N \subseteq K$ and there does not exist a strongly classical 2-absorbing second submodule $T$ of $M$ such that $N \subset T \subset K$.
\end{defn}

\begin{lem}\label{l52.23}
 Let $M$ be an $R$-module. Then every strongly classical 2-absorbing second submodule of $M$ is contained in a maximal strongly classical 2-absorbing second submodule of $M$.
\end{lem}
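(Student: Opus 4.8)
The plan is to mimic the proof of Lemma \ref{l2.23}, replacing Proposition \ref{p2.21} with its strongly-classical analogue Proposition \ref{p52.21}, which has already been established. So the proof will be a short Zorn's Lemma argument.

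\medskip

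First I would fix a strongly classical 2-absorbing second submodule $N$ of $M$ and form the set
$$
\Sigma=\{\,T : T \text{ is a strongly classical 2-absorbing second submodule of } M \text{ with } N\subseteq T\,\}.
$$
This set is nonempty since $N\in\Sigma$, and it is partially ordered by inclusion. To apply Zorn's Lemma I need every chain in $\Sigma$ to have an upper bound in $\Sigma$. Given a chain $\{K_i\}_{i\in I}$ in $\Sigma$, its union $\sum_{i\in I}K_i$ (which for a chain of submodules is just the set-theoretic union, hence a submodule) contains $N$, and by Proposition \ref{p52.21} it is again a strongly classical 2-absorbing second submodule of $M$; thus it lies in $\Sigma$ and is an upper bound for the chain. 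Zorn's Lemma then yields a maximal element $V$ of $\Sigma$.

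\medskip

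Finally I would check that such a maximal element $V$ is exactly a maximal strongly classical 2-absorbing second submodule of $M$ in the sense of Definition \ref{d52.22}: $V$ is a strongly classical 2-absorbing second submodule, $N\subseteq V$, and if some strongly classical 2-absorbing second submodule $T$ satisfied $V\subset T$ (taking $K=M$ in the definition, or simply noting $T\subseteq M$), then $T\in\Sigma$ would contradict maximality of $V$ in $\Sigma$. Hence $N$ is contained in the maximal strongly classical 2-absorbing second submodule $V$, as claimed. There is no real obstacle here — the only point requiring care is that a chain of submodules is closed under union so that Proposition \ref{p52.21} applies, and that "maximal in $\Sigma$'' coincides with the notion in Definition \ref{d52.22}; both are routine.
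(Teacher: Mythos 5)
Your proof is correct and is exactly the argument the paper intends: a Zorn's Lemma application whose chain condition is supplied by Proposition \ref{p52.21} (the paper's own proof is just the one-line remark to this effect). Your additional care about the union of a chain being a submodule and about matching Definition \ref{d52.22} with $K=M$ is sound and fills in the routine details the paper omits.
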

\begin{proof}
This is proved easily by using Zorn's Lemma and Proposition \ref{p52.21}.
\end{proof}

\begin{thm}\label{t52.24} Let $M$ be an Artinian $R$-module. Then
every non-zero submodule of $M$ has only a finite number of maximal
strongly classical 2-absorbing second submodules.
\end{thm}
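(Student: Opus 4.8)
The plan is to transcribe the proof of Theorem~\ref{t2.24} almost verbatim, replacing ``classical 2-absorbing second'' by ``strongly classical 2-absorbing second'' everywhere and using Theorem~\ref{t52.19} in place of Definition~\ref{d2.12}. First I would argue by contradiction: assume some non-zero submodule of $M$ has infinitely many maximal strongly classical 2-absorbing second submodules. Since $M$ is Artinian, among all submodules with this property there is a minimal one, say $S$. Then $S$ itself is not strongly classical 2-absorbing second, for otherwise $S$ would be its own unique maximal strongly classical 2-absorbing second submodule, contradicting the choice of $S$.

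Because $S$ fails to be strongly classical 2-absorbing second, Theorem~\ref{t52.19}$(a)\Rightarrow(b)$ (used contrapositively) supplies $a,b,c\in R$ and a submodule $K$ of $M$ with $abcS\subseteq K$ but $abS\not\subseteq K$, $acS\not\subseteq K$, and $bcS\not\subseteq K$. Let $V$ be any maximal strongly classical 2-absorbing second submodule of $S$. From $abcV\subseteq abcS\subseteq K$ and Theorem~\ref{t52.19}(b) applied to $V$, one of $abV\subseteq K$, $acV\subseteq K$, $bcV\subseteq K$ holds, so $V$ lies in one of $(K:_Sab)$, $(K:_Sac)$, $(K:_Sbc)$. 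Each of these three submodules is \emph{properly} contained in $S$: e.g.\ $abS\not\subseteq K$ means $S\not\subseteq(K:_Mab)$, hence $(K:_Sab)=(K:_Mab)\cap S\subsetneq S$.

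Next I would upgrade the inclusion $V\subseteq(K:_Sab)$ to the statement that $V$ is a maximal strongly classical 2-absorbing second submodule of $(K:_Sab)$ (and similarly for the other two). Indeed, by Zorn's Lemma together with Proposition~\ref{p52.21} applied inside $(K:_Sab)$ --- exactly as in Lemma~\ref{l52.23} --- $V$ is contained in some maximal strongly classical 2-absorbing second submodule $V'$ of $(K:_Sab)$; but $V'\subseteq(K:_Sab)\subsetneq S$, so $V\subseteq V'\subsetneq S$, and the maximality of $V$ in $S$ forces $V=V'$. Consequently every maximal strongly classical 2-absorbing second submodule of $S$ is a maximal strongly classical 2-absorbing second submodule of one of the three proper submodules $(K:_Sab)$, $(K:_Sac)$, $(K:_Sbc)$. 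By the minimality of $S$, each of these has only finitely many such submodules, so $S$ has only finitely many --- the desired contradiction.

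The argument is routine once the pattern of Theorem~\ref{t2.24} is in hand; the only point needing attention is the strictness $(K:_Sab)\subsetneq S$ (and its analogues), which is precisely where the hypotheses $abS,acS,bcS\not\subseteq K$ enter and which is what makes the minimality of $S$ usable.
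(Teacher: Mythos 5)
Your proof is correct and follows essentially the same route as the paper, which itself just says to transfer the argument of Theorem~\ref{t2.24} via Lemma~\ref{l52.23}; the only cosmetic difference is that you invoke Theorem~\ref{t52.19}(b) to witness the failure with an arbitrary submodule $K$ instead of a completely irreducible $L$. You also make explicit two points the paper leaves implicit (the strictness $(K:_Sab)\subsetneq S$ and the transfer of maximality from $S$ to $(K:_Sab)$), which is a welcome tightening rather than a departure.
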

\begin{proof}
Use the technique of Theorem \ref{t2.24} any apply Lemma \ref{l52.23}.
\end{proof}

\begin{thm}\label{t522.16}
Let $f : M \rightarrow \acute{M}$ be a monomorphism of R-modules. Then we have the following.
\begin{itemize}
  \item [(a)] If $N$ is a strongly classical 2-absorbing second submodule of $M$, then $f(N)$ is a strongly classical 2-absorbing second submodule of $\acute{M}$.
  \item [(b)] If $\acute{N}$ is a strongly classical 2-absorbing second submodule of $f(M)$, then $f^{-1}(\acute{N})$ is a strongly classical 2-absorbing second submodule of $M$.
\end{itemize}
\end{thm}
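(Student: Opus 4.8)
The plan is to imitate the proof of Theorem~\ref{t1.1}(c), but working with arbitrary submodules of the ambient modules instead of completely irreducible ones, and using the equivalence $(a)\Leftrightarrow(b)$ of Theorem~\ref{t52.19} in place of the defining property. The only facts about $f$ that I will use are: for any submodule $X$ and any element (or ideal) $c$ one has $f(cX)=c\,f(X)$; $f(f^{-1}(Y))=Y\cap f(M)$ and $f^{-1}(f(X))\supseteq X$; and, since $f$ is injective, $f^{-1}(f(X))=X$ and $f(X)\neq 0$ whenever $X\neq 0$. One could in fact finish in a line by noting that, as already observed after Theorem~\ref{t52.19}, "strongly classical 2-absorbing second submodule'' is an intrinsic property of the module itself, and that $f$ restricts to $R$-module isomorphisms $N\cong f(N)$ and $f^{-1}(\acute{N})\cong\acute{N}$ (the second because $\acute{N}\subseteq f(M)$); but I would write the argument out directly.

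For part (a): I would first note that $f(N)$ is a non-zero submodule of $\acute{M}$ since $N\neq 0$ and $f$ is a monomorphism. Then, given $a,b,c\in R$ and a submodule $K$ of $\acute{M}$ with $abc\,f(N)\subseteq K$, I would observe that $f(abcN)=abc\,f(N)\subseteq K$, hence $abcN\subseteq f^{-1}(K)$, which is a submodule of $M$. Applying Theorem~\ref{t52.19} (the equivalence $(a)\Leftrightarrow(b)$) to the strongly classical 2-absorbing second submodule $N$ of $M$, one of $abN$, $bcN$, $acN$ lies in $f^{-1}(K)$; pushing that containment forward through $f$ and using $f(f^{-1}(K))\subseteq K$ together with $f(abN)=ab\,f(N)$, etc., gives that one of $ab\,f(N)$, $bc\,f(N)$, $ac\,f(N)$ lies in $K$. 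A second application of Theorem~\ref{t52.19} then shows $f(N)$ is strongly classical 2-absorbing second in $\acute{M}$.

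For part (b): The non-vanishing of $f^{-1}(\acute{N})$ is exactly the argument in Theorem~\ref{t1.1}(c): if it were zero then $\acute{N}=\acute{N}\cap f(M)=f(f^{-1}(\acute{N}))=f(0)=0$, a contradiction. Then, given $a,b,c\in R$ and a submodule $K$ of $M$ with $abc\,f^{-1}(\acute{N})\subseteq K$, I would compute $abc\,\acute{N}=abc\,(\acute{N}\cap f(M))=f(abc\,f^{-1}(\acute{N}))\subseteq f(K)$, where $f(K)$ is a submodule of $f(M)$. Since $\acute{N}$ is strongly classical 2-absorbing second in $f(M)$, Theorem~\ref{t52.19} gives that one of $ab\,\acute{N}$, $bc\,\acute{N}$, $ac\,\acute{N}$ is contained in $f(K)$; taking preimages and using $f^{-1}(f(K))=K$ (injectivity) together with, for instance, $ab\,f^{-1}(\acute{N})\subseteq f^{-1}(ab\,\acute{N})$, I conclude that one of $ab\,f^{-1}(\acute{N})$, $bc\,f^{-1}(\acute{N})$, $ac\,f^{-1}(\acute{N})$ is contained in $K$, and Theorem~\ref{t52.19} finishes part (b).

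I do not expect a genuine obstacle here: the proof is pure bookkeeping with images and preimages. The two points that actually require $f$ to be a monomorphism are the identity $f^{-1}(f(K))=K$ in part (b) and the non-vanishing of $f(N)$ (resp. $f^{-1}(\acute{N})$); the hypothesis $\acute{N}\subseteq f(M)$ enters only through $\acute{N}\cap f(M)=\acute{N}$, i.e. $f(f^{-1}(\acute{N}))=\acute{N}$. The cleanest conceptual statement of what is happening is simply that $f$ induces an isomorphism of $M$ onto $f(M)$ under which $N$ corresponds to $f(N)$ and $f^{-1}(\acute{N})$ corresponds to $\acute{N}$, and the property in question is invariant under isomorphism.
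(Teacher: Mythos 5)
Your proof is correct and follows essentially the same route as the paper: part (b) is word-for-word the paper's argument, and for part (a) the paper just invokes characterization (c) of Theorem \ref{t52.19} (that $abcN$ equals one of $abN$, $acN$, $bcN$) and applies $f$ to that equation, rather than your pull-back of submodules via characterization (b). Both versions of (a) come down to the observation you make at the end, namely that the property is intrinsic and transported along the isomorphism $M\cong f(M)$.
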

\begin{proof}
(a) Since $N \not =0$ and $f$ is a monomorphism, we have $f(N) \not =0$. Let $a, b, c \in R$. Then by Theorem \ref{t52.19} $(a)\Rightarrow (c)$, we can assume that $abcN=abN$. Thus
$$
abcf(N)=f(abcN)=f(abN)=abf(N).
$$
Hence $f(N)$ is a classical 2-absorbing second submodule of $\acute{M}$ by Theorem \ref{t52.19} $(c)\Rightarrow (a)$.

(b) If $f^{-1}(\acute{N})=0$, then $f(M) \cap \acute{N}=ff^{-1}(\acute{N})=f(0)=0$. Thus $\acute{N}=0$, a contradiction. Therefore, $f^{-1}(\acute{N})\not=0$. Now let $a, b, c \in R$, $K$ be a submodule of $M$, and $abcf^{-1}(\acute{N})\subseteq K$. Then
  $$
  abc\acute{N}=abc(f(M) \cap \acute{N})=abcff^{-1}(\acute{N})\subseteq f(K).
  $$
Thus as $\acute{N}$ is a strongly classical 2-absorbing second submodule, $ab\acute{N} \subseteq f(K)$ or $bc\acute{N} \subseteq f(K)$ or $ac\acute{N} \subseteq f(K)$. Therefore, $abf^{-1}(\acute{N}) \subseteq f^{-1}f(K)=K$ or $bcf^{-1}(\acute{N}) \subseteq f^{-1}f(K)=K$ or  $acf^{-1}(\acute{N}) \subseteq f^{-1}f(K)=K$, as desired.
\end{proof}

Let $R_i$ be a commutative ring with identity and $M_i$ be an $R_i$-module for $i = 1, 2$. Let $R = R_1 \times R_2$. Then $M = M_1 \times M_2$ is an $R$-module and each submodule of $M$ is in the form of $N = N_1 \times N_2$ for some submodules $N_1$ of $M_1$ and $N_2$ of $M_2$.
\begin{thm}\label{t52.224}
Let $R = R_1 \times R_2$ be a decomposable ring and let $M = M_1 \times M_2$
be an $R$-module, where $M_1$ is an $R_1$-module and $M_2$ is an $R_2$-module. Suppose that $N = N_1 \times N_2$ is a non-zero submodule of $M$. Then the following conditions are equivalent:
\begin{itemize}
  \item [(a)] $N$ is a strongly classical 2-absorbing second submodule of $M$;
  \item [(b)] Either $N_1 = 0$ and $N_2$ is a strongly classical 2-absorbing second submodule of $M_2$ or $N_2 = 0$ and $N_1$ is a strongly classical 2-absorbing second submodule of $M_1$ or $N_1$, $N_2$ are weakly second submodules of $M_1$, $M_2$, respectively.
\end{itemize}
\end{thm}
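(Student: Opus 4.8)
The plan is to reduce everything to Theorem \ref{t52.19}, especially the equivalence $(a)\Leftrightarrow(c)$, after first recording one elementary fact: a non-zero submodule $L$ of a module $W$ over a ring $S$ is weakly second if and only if $rsL=rL$ or $rsL=sL$ for all $r,s\in S$. Indeed, applying the definition of weakly second to $K=rsL$ gives $rL\subseteq rsL$ or $sL\subseteq rsL$, and the reverse inclusions are automatic; the converse is immediate. Throughout I write a generic element of $R=R_1\times R_2$ as $(a_1,a_2)$, so that $(a_1,a_2)(N_1\times N_2)=a_1N_1\times a_2N_2$.

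For $(b)\Rightarrow(a)$ I would split into the three cases of (b). If $N_1=0$ and $N_2$ is strongly classical 2-absorbing second in $M_2$, then for $a=(a_1,a_2)$, $b=(b_1,b_2)$, $c=(c_1,c_2)$ the first coordinates of $abcN,abN,acN,bcN$ all vanish, while Theorem \ref{t52.19}$(a)\Rightarrow(c)$ for $N_2$ gives $a_2b_2c_2N_2\in\{a_2b_2N_2,a_2c_2N_2,b_2c_2N_2\}$; hence $abcN$ equals one of $abN,acN,bcN$, and $N$ is strongly classical 2-absorbing second by Theorem \ref{t52.19}$(c)\Rightarrow(a)$. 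The case $N_2=0$ is symmetric. If $N_1,N_2$ are weakly second in $M_1,M_2$, I would first check directly that $N_1\times 0$ and $0\times N_2$ are weakly second submodules of $M$ (from $r_1s_1N_1\subseteq K_1$ one gets $r_1N_1\subseteq K_1$ or $s_1N_1\subseteq K_1$, and similarly on the other factor), and then conclude that $N=(N_1\times 0)+(0\times N_2)$ is strongly classical 2-absorbing second by Proposition \ref{p52.20}(b).

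For $(a)\Rightarrow(b)$, suppose first $N_1=0$, so $N_2\neq 0$; feeding the triples $(0,a_2),(0,b_2),(0,c_2)$ into Theorem \ref{t52.19}$(a)\Rightarrow(c)$ for $N$ yields $a_2b_2c_2N_2\in\{a_2b_2N_2,a_2c_2N_2,b_2c_2N_2\}$, so $N_2$ is strongly classical 2-absorbing second in $M_2$; the case $N_2=0$ is symmetric. If $N_1\neq 0$ and $N_2\neq 0$, I would show both are weakly second. For $N_1$: given $r_1,s_1\in R_1$, apply Theorem \ref{t52.19}$(a)\Rightarrow(c)$ to $N$ with $a=(r_1,1)$, $b=(s_1,1)$, $c=(1,0)$, so that $abcN=r_1s_1N_1\times 0$, $abN=r_1s_1N_1\times N_2$, $acN=r_1N_1\times 0$, $bcN=s_1N_1\times 0$; since $N_2\neq 0$ we have $abcN\neq abN$, hence $r_1s_1N_1=r_1N_1$ or $r_1s_1N_1=s_1N_1$, i.e. $N_1$ is weakly second by the fact recorded above. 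Symmetrically, with $a=(1,r_2)$, $b=(1,s_2)$, $c=(0,1)$ and using $N_1\neq 0$, one gets $N_2$ weakly second.

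The step I expect to be the crux is this last one: one must choose the elements of $R_1\times R_2$ so that the ternary identity of Theorem \ref{t52.19}(c) collapses to a binary one on a single factor. Taking the third element to be the idempotent $(1,0)$ annihilates the second coordinate of $abcN$, and then $N_2\neq 0$ forces the $abN$ option out, leaving precisely the weakly second condition on $N_1$. The remaining cases, and the direction $(b)\Rightarrow(a)$ in the degenerate situations, are routine componentwise bookkeeping once the weakly-second reformulation is in place.
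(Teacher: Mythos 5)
Your proof is correct and follows essentially the same route as the paper's: the decisive step in $(a)\Rightarrow(b)$ is the same choice of elements $(r_1,1),(s_1,1),(1,0)$ with $N_2\neq 0$ ruling out the $abN$ alternative, and $(b)\Rightarrow(a)$ in the two-weakly-second case is handled, as in the paper, by Proposition \ref{p52.20}(b) applied to $N=(N_1\times 0)+(0\times N_2)$. The only (harmless) difference is that you run the argument through the equality characterization (c) of Theorem \ref{t52.19}, whereas the paper uses the containment form with a completely irreducible submodule $L_2\not\supseteq N_2$ and the test submodule $K\times L_2$; your treatment of the degenerate cases $N_i=0$ via condition (c) is in fact a bit more explicit than the paper's isomorphism argument.
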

\begin{proof}
$(a) \Rightarrow (b)$.  Suppose that $N$ is a strongly classical 2-absorbing second
submodule of $M$ such that $N_2 = 0$. From our hypothesis, $N$ is non-zero, so $N_1 \not =0$. Set $\acute{M}=M_1\times 0$. One can see that $\acute{N}=N_1 \times 0$ is a strongly classical 2-absorbing second submodule of $\acute{M}$.
Also observe that $\acute{M} \cong M_1$ and $\acute{N} \cong N_1$. Thus $N_1$ is a strongly classical 2-absorbing second submodule of $M_1$. Suppose that $N_1\not =0$ and $N_2 \not =0$. We show that $N_1$ is a weakly second submodule of $M_1$. Since $N_2 \not =0$, there exists a completely irreducible submodule $L_2$ of $M_2$ such that $N_2 \not \subseteq L_2$. Let $abN_1 \subseteq K$ for some $a, b \in R_1$ and submodule $K$ of $M_1$. Thus $(a, 1)(b, 1)(1, 0)(N_1\times N_2) =
abN_1\times 0 \subseteq  K \times L_2$. So either $(a, 1)(b, 1)(N_1\times N_2) = abN_1\times N_2 \subseteq K \times L_2$ or  $(a, 1)(1, 0)(N_1\times N_2) = aN_1\times 0\subseteq K \times L_2$ or $(b, 1)(1, 0)(N1\times N_2) = bN_1\times 0 \subseteq K \times L_2$. If $abN_1\times N_2 \subseteq K \times L_2$, then $N_2 \subseteq L_2$, a contradiction. Hence either $aN_1\subseteq K$ or $bN_1\subseteq K$ which shows that $N_1$ is a weakly second submodule of $M_1$. Similarly, we can show that $N_2$ is a weakly second submodule of $M_2$.

$(b) \Rightarrow (a)$.
Suppose that $N = N_1 \times 0$, where $N_1$ is a strongly classical 2-absorbing (resp.
weakly) second submodule of $M_1$. Then it is clear that $N$ is a strongly classical 2-absorbing (resp. weakly) second submodule of $M$. Now, assume that $N = N_1 \times N_2$, where $N_1$ and $N_2$ are weakly second submodules of $M_1$ and $M_2$, respectively. Hence $(N_1 \times 0)+ (0 \times N_2) = N_1 \times N_2 = N$ is a strongly classical 2-absorbing second submodule of $M$, by Preposition \ref{p52.20} (b).
\end{proof}

\begin{lem}\label{l52.25}
Let $R = R_1\times R_2\times \cdots \times R_n$ be a decomposable ring and $M =
M_1 \times M_2 \cdots \times M_n$ be an $R$-module where for every $1\leq i \leq n$, $M_i$ is an $R_i$-module, respectively. A non-zero submodule $N$ of $M$ is a weakly second submodule of $M$ if and only if $N =\times^n_{i=1}N_i$ such that for some $k \in \{1, 2, ..., n\}$, $N_k$ is a weakly second submodule of $M_k$, and $N_i =0$ for every $i\in \{1, 2, ..., n\}\setminus \{k\}$.
\end{lem}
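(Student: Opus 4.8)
The plan is to prove both implications directly, exploiting the idempotent decomposition of $R$ together with the fact — recorded (for $n=2$) in the paragraph preceding Theorem \ref{t52.224} and valid for arbitrary $n$ by iteration — that every submodule of $M$ has the form $\times_{i=1}^{n}K_i$ with each $K_i$ a submodule of $M_i$. For $1\le i\le n$ write $e_i\in R$ for the idempotent with $1$ in the $i$-th coordinate and $0$ elsewhere, so that $e_ie_j=0$ whenever $i\neq j$, and multiplication by $e_i$ projects $M$ onto its $i$-th summand. Thus for $r=(r_1,\dots,r_n)$ the submodule $rN$ is $r_1N_1\times\cdots\times r_nN_n$, and comparison of submodules of $M$ can always be carried out coordinatewise.

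For the direction $(\Leftarrow)$, assume $N=\times_{i=1}^{n}N_i$ with $N_k$ a weakly second submodule of $M_k$ and $N_i=0$ for $i\neq k$; then $N\neq 0$ since $N_k\neq 0$. Given $r=(r_i),s=(s_i)\in R$ and a submodule $K=\times_{i=1}^{n}K_i$ of $M$ with $rsN\subseteq K$, the $k$-th coordinate gives $r_ks_kN_k\subseteq K_k$, so weak secondness of $N_k$ yields $r_kN_k\subseteq K_k$ or $s_kN_k\subseteq K_k$; since all other coordinates of $N$ vanish this says exactly $rN\subseteq K$ or $sN\subseteq K$, so $N$ is weakly second. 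For the direction $(\Rightarrow)$, I would first show at most one $N_i$ is non-zero: if $N_i\neq 0$ and $N_j\neq 0$ with $i\neq j$, let $K$ be the submodule of $M$ equal to $M_\ell$ in every coordinate $\ell\notin\{i,j\}$ and to $0$ in coordinates $i$ and $j$; then $e_ie_jN=0\subseteq K$, while $e_iN$ has $i$-th coordinate $N_i\neq 0$ against the $0$ in that slot of $K$, so $e_iN\not\subseteq K$, and symmetrically $e_jN\not\subseteq K$, contradicting that $N$ is weakly second. Since $N\neq 0$, exactly one coordinate $N_k$ is non-zero. To see $N_k$ is weakly second in $M_k$, take $a,b\in R_k$ and a submodule $T$ of $M_k$ with $abN_k\subseteq T$, let $r,s\in R$ agree with $a,b$ in the $k$-th slot and equal $1$ elsewhere, and let $K$ be the submodule of $M$ with $k$-th coordinate $T$ and all other coordinates $0$; then $rsN\subseteq K$, hence $rN\subseteq K$ or $sN\subseteq K$, which read off the $k$-th coordinate as $aN_k\subseteq T$ or $bN_k\subseteq T$, and $N_k\neq 0$, so $N_k$ is weakly second.

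The only step needing genuine care — the "main obstacle", modest as it is — is the choice of the test submodule $K$ in the forward direction: it must be large enough to absorb $e_ie_jN=0$ yet small enough to exclude both $e_iN$ and $e_jN$, which pins down the "$0$ in slots $i$ and $j$, full in all others" choice; once that is found the rest is bookkeeping. An alternative route is induction on $n$ using $R\cong R_1\times(R_2\times\cdots\times R_n)$ to reduce to the case $n=2$, but since the idempotent argument above handles all $n$ uniformly I would present it directly rather than via Theorem \ref{t52.224}.
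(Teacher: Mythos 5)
Your proof is correct and follows essentially the same route as the paper: decompose $N$ and the test submodules coordinatewise, use the orthogonal idempotents to force all but one component $N_k$ to vanish, and then read off weak secondness of $N_k$ in the $k$-th coordinate. The only cosmetic difference is that the paper applies the equality form of weak secondness ($rsN=rN$ or $rsN=sN$) to the idempotents, which amounts to taking $K=0$ — so the test submodule you single out as the ``main obstacle'' could simply be the zero submodule — but your choice works just as well.
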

\begin{proof}
($\Rightarrow$)
Let $N$ be a weakly second submodule of $M$. We know $N =\times^n_{i=1}N_i$
where for every $1\leq i \leq n$, $N_i$ is a submodule of $M_i$, respectively. Assume that $N_r$ is a non-zero submodule of $M_r$ and $N_s$ is a non-zero submodule of $M_s$ for some $1\leq r <s \leq n$. Since $N$ is a weakly second submodule of $M$,
$$
(0,\cdots, 0, 1_{R_r},0, \cdots, 0)(0,\cdots, 0, 1_{R_s},0, \cdots, 0)N=(0,\cdots, 0, 1_{R_r},0, \cdots, 0)N
$$
or
$$
(0,\cdots, 0, 1_{R_r},0, \cdots, 0)(0,\cdots, 0, 1_{R_s},0, \cdots, 0)N=(0,\cdots, 0, 1_{R_s},0, \cdots, 0)N.
$$
Thus $N_r=0$ or $N_s=0$. This contradiction shows that exactly one of the $N_i$’s is non-zero, say $N_k$. Now, we show that $N_k$ is a weakly second submodule of $M_k$. Let $a, b \in R_k$. Since $N$ is a weakly second submodule of $M$,
$$
(0,\cdots, 0, a,0, \cdots, 0)(0,\cdots, 0, b,0, \cdots, 0)N=(0,\cdots, 0, a,0, \cdots, 0)N
$$
or
$$
(0,\cdots, 0, a,0, \cdots, 0)(0,\cdots, 0, b,0, \cdots, 0)N=(0,\cdots, 0, b,0, \cdots, 0)N
$$
Thus
$abN_k=aN_k$ or $abN_k=bN_k$ as needed.

($\Leftarrow$) This is clear.
\end{proof}

\begin{thm}\label{t52.26}
Let $R = R_1\times R_2\times \cdots \times R_n$ ($2\leq n < \infty$) be a decomposable ring and $M =
M_1 \times M_2 \cdots \times M_n$ be an $R$-module, where for every $1\leq i \leq n$, $M_i$ is an $R_i$-module, respectively. Then for a non-zero submodule $N$ of $M$ the following conditions are equivalent:
\begin{itemize}
  \item [(a)] $N$ is a strongly classical 2-absorbing second submodule of $M$;
  \item [(b)]  Either $N =\times^n_{i=1}N_i$ such that for some $k \in \{1, 2, ..., n\}$, $N_k$ is a strongly classical 2-absorbing second submodule of $M_k$, and $N_i =0$ for every $i\in \{1, 2, ..., n\}\setminus \{k\}$ or $N =\times^n_{i=1}N_i$ such that for some $k,m \in \{1, 2, ..., n\}$,$N_k$ is a weakly second submodule of $M_k$, $N_m$ is a weakly second submodule of $M_m$, and $N_i =0$ for every $i \in \{1, 2, ..., n\}\setminus \{k,m\}$.
\end{itemize}
\end{thm}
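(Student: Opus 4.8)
The plan is to induct on $n$, peeling off one factor at a time and feeding into the argument the two-factor case (Theorem \ref{t52.224}) together with the component-wise description of weakly second submodules of a product (Lemma \ref{l52.25}).

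\textbf{Base case $n=2$.} Here the assertion is just Theorem \ref{t52.224}: the first alternative of (b), with $k\in\{1,2\}$ and $N_i=0$ for $i\neq k$, unpacks to ``$N_2=0$ and $N_1$ is strongly classical 2-absorbing second'' or ``$N_1=0$ and $N_2$ is strongly classical 2-absorbing second'', while the second alternative, with $k\neq m$ in $\{1,2\}$, forces $\{k,m\}=\{1,2\}$ and says that $N_1$ and $N_2$ are both weakly second. So nothing new is needed for $n=2$.

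\textbf{Inductive step ($n\geq 3$).} I would write $R=R_1\times R'$ and $M=M_1\times M'$ with $R'=R_2\times\cdots\times R_n$ and $M'=M_2\times\cdots\times M_n$, and correspondingly $N=N_1\times N'$ with $N'=N_2\times\cdots\times N_n$ (a nonzero submodule of $M$ being exactly such a product whose factors are not all zero). Applying Theorem \ref{t52.224} to the two-factor decomposition $M=M_1\times M'$, the condition that $N$ be a strongly classical 2-absorbing second submodule of $M$ is equivalent to one of: (i) $N_1=0$ and $N'$ is strongly classical 2-absorbing second in $M'$; (ii) $N'=0$ and $N_1$ is strongly classical 2-absorbing second in $M_1$; (iii) $N_1$ and $N'$ are weakly second in $M_1$ and $M'$ respectively. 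In case (i) I would invoke the induction hypothesis on $M'=\prod_{i=2}^n M_i$ to replace ``$N'$ strongly classical 2-absorbing second'' by the corresponding alternative of (b) over the index set $\{2,\dots,n\}$; together with $N_1=0$ this is precisely alternative (b) for $M$ with the distinguished index (or pair of indices) contained in $\{2,\dots,n\}$. Case (ii) is alternative (b) with $k=1$. In case (iii), $N_1$ and $N'$ are both nonzero, and Lemma \ref{l52.25} applied to the weakly second submodule $N'$ of $\prod_{i=2}^n M_i$ produces a single $m\in\{2,\dots,n\}$ with $N_m$ weakly second in $M_m$ and all other factors of $N'$ zero; adjoining $N_1$ (weakly second in $M_1$) yields exactly the second alternative of (b) with $\{k,m\}=\{1,m\}$. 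The converse direction runs the same three cases in reverse: for the configurations of (b) in which $N_1=0$ I would apply the induction hypothesis and then Theorem \ref{t52.224}(i); for the configuration in which $N_1$ is the unique nonzero, strongly classical 2-absorbing second factor I would apply Theorem \ref{t52.224}(ii); and for the configuration in which $N_1$ and exactly one further $N_m$ are the nonzero (weakly second) factors I would use the easy direction of Lemma \ref{l52.25} to see that $N'$ is weakly second in $M'$ and then apply Theorem \ref{t52.224}(iii).

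I expect the only real difficulty to be organizational rather than mathematical: after splitting off $R_1$ one must carefully track whether the distinguished index or pair of indices of $N'$ remains inside $\{2,\dots,n\}$ or whether index $1$ joins them, and one must remember to invoke Lemma \ref{l52.25} in precisely the boundary subcase (iii), where $N_1$ pairs with a single surviving component of $N'$. No module-theoretic input beyond Theorem \ref{t52.224} and Lemma \ref{l52.25} is required.
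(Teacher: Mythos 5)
Your proposal is correct and follows essentially the same route as the paper: induction on $n$ using Theorem \ref{t52.224} for the two-factor split and Lemma \ref{l52.25} to identify weakly second submodules of the remaining product, the only cosmetic difference being that you peel off the first factor while the paper peels off the last.
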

\begin{proof}
We use induction on $n$. For $n = 2$ the result holds by Theorem \ref{t52.224}. Now
let $3\leq n < \infty$ and suppose that the result is valid when $K = M_1\times \cdots \times M_{n-1}$. We show that the result holds when $M = K \times M_n$. By Theorem \ref{t52.224}, $N$ is a strongly classical 2-absorbing second submodule of $M$ if and only if either $N = L \times 0$ for some strongly classical 2-absorbing second submodule $L$ of $K$ or $N = 0 \times L_n$ for some strongly classical 2-absorbing second submodule $L_n$ of $M_n$ or $N = L \times L_n$ for some weakly second submodule $L$ of $K$ and some weakly second submodule $L_n$ of $M_n$. Note that by Lemma \ref{l52.25}, a non-zero submodule $L$ of $K$ is a weakly second submodule of $K$ if and only
if $L = \times^{n-1}_{i=1}N_i$ such that for some $k \in \{1, 2, ..., n - 1\}$, $N_k$ is a weakly second submodule of $M_k$ and $N_i =0$ for every $i\in \{1, 2, ..., n - 1\}\setminus \{k\}$. Hence the claim is proved.
\end{proof}

\begin{ex}\label{e58.15}
Let $R$ be a Noetherian ring and let $E =\oplus _{m\in Max(R)}E(R/m)$. Then
for each 2-absorbing ideal $P$ of $R$, $(0 :_E P)$ is a strongly classical 2-absorbing second submodule of $E$.
\end{ex}
\begin{proof}
By using \cite[p. 147]{SP72}, $Hom_R(R/P,E)\not =0$. Now since $(0 :_E P)\cong Hom_R(R/P,E)$,  $(0 :_E P)$ is a strongly 2-absorbing second submodule of $E$ by \cite[3.27]{AF16}. Now the result follows from Example \ref{e59.1}.
\end{proof}

\begin{thm}\label{l58.11} Let R be a $um$-ring and $M$ be an $R$-module. If $E$ is an injective $R$-module and $N$ is a classical 2-absorbing submodule of $M$ such that $Hom_R(M/N,E) \not =0$, then $Hom_R(M/N,E)$ is a strongly classical 2-absorbing second $R$-module.
\end{thm}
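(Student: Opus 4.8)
The plan is to set $A:=M/N$ and $H:=Hom_R(A,E)$, which is a non-zero $R$-module by hypothesis, and to verify condition (c) of Theorem~\ref{t52.19}: it suffices to show that for every $a,b,c\in R$ one of the equalities $abcH=abH$, $abcH=acH$, $abcH=bcH$ holds. Throughout I will use that the $R$-action on $H$ is $(rf)(x)=r\,f(x)=f(rx)$, so that for $r\in R$ the element $rf\in H$ is the composite of the canonical surjection $\mu_r\colon A\to rA$, $x\mapsto rx$, with the restriction $f|_{rA}\colon rA\to E$.

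Fix $a,b,c\in R$. The decisive step is to prove that the submodule $(0:_A abc)$ of $A$ equals one of $(0:_A ab)$, $(0:_A ac)$, $(0:_A bc)$. If $\bar m\in(0:_A abc)$, with $m\in M$, then $abcm\in N$, and since $N$ is a classical $2$-absorbing submodule of $M$ this forces $abm\in N$, or $acm\in N$, or $bcm\in N$; hence $\bar m\in(0:_A ab)\cup(0:_A ac)\cup(0:_A bc)$. The reverse inclusions being clear, $(0:_A abc)=(0:_A ab)\cup(0:_A ac)\cup(0:_A bc)$. Since $R$ is a $um$-ring, the module $(0:_A abc)$ is a finite union of its own submodules and so must coincide with one of them; without loss of generality $(0:_A abc)=(0:_A ab)$.

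Now $\mu_{ab}\colon A\to abA$ and $\mu_{abc}\colon A\to abcA$ have the same kernel, so multiplication by $c$ defines an isomorphism $\theta\colon abA\to abcA$ with $\theta(abx)=abcx$, that is, $\theta\circ\mu_{ab}=\mu_{abc}$. The inclusion $abcH\subseteq abH$ is automatic, so it remains to prove $abH\subseteq abcH$. Given $f\in H$, the injectivity of $E$ together with $abcA\subseteq A$ lets us extend the homomorphism $(f|_{abA})\circ\theta^{-1}\colon abcA\to E$ to some $g\in H$; a direct computation using $\theta\circ\mu_{ab}=\mu_{abc}$ then gives $abcg=abf$, so $abf\in abcH$. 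Hence $abH=abcH$, and in the remaining two cases the same argument yields $abcH=acH$ or $abcH=bcH$. By Theorem~\ref{t52.19} this shows that $H=Hom_R(M/N,E)$ is a strongly classical $2$-absorbing second $R$-module.

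The substance of the argument is concentrated in the second paragraph: the $um$-ring hypothesis is precisely what upgrades the genuine three-way alternative built into the definition of a classical $2$-absorbing submodule to a single honest equality of colon submodules of $A=M/N$, and once that equality is available the injectivity of $E$ turns the rest into the short diagram chase of the third paragraph. I expect the only mildly delicate bookkeeping to be the well-definedness and bijectivity of $\theta$ (both immediate from $(0:_A ab)=(0:_A abc)$) and the identity $\theta\circ\mu_{ab}=\mu_{abc}$.
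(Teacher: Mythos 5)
Your proof is correct and follows essentially the same route as the paper's: both arguments reduce to showing that, after permuting $a,b,c$, one has $(N:_M abc)=(N:_M ab)$ (your $(0:_A abc)=(0:_A ab)$, obtained from the classical 2-absorbing property plus the $um$-ring hypothesis applied to the three-fold union of colon submodules), and then transfer this equality through $Hom_R(-,E)$ to conclude $abc\,H=ab\,H$, which is condition (c) of Theorem \ref{t52.19}. The only difference is that the paper outsources both steps to citations --- the colon equality to \cite[Theorem 4]{mto16} and the identity $Hom_R(M/(N:_M r),E)=r\,Hom_R(M/N,E)$ to \cite[3.13(a)]{AF101} --- whereas you prove them inline; your construction of $\theta$ and the extension of $(f|_{abA})\circ\theta^{-1}$ via the injectivity of $E$ is precisely a proof of the latter identity.
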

\begin{proof}
Let $a, b, c \in R$. Since $N$ is a classical 2-absorbing submodule
of $M$, we can assume that $(N:_Mabc)=(N:_Mab)$ by \cite[Theorem 4]{mto16}.
Since $E$ is an injective $R$-module, by replacing $M$ with $M/N$ in \cite[3.13 (a)]{AF101}, we have $Hom_R(M/(N:_Mr), E)=rHom_R(M/N,E)$ for each $r \in R$. Therefore,
$$
abcHom_R(M/N, E)=Hom_R(M/(N:_Mabc), E)=
$$
$$
Hom_R(M/(N:_Mab), E)=abHom_R(M/N,E),
$$
as needed
\end{proof}

\begin{thm}\label{t58.13}
Let $M$ be a strongly classical 2-absorbing second $R$-module and $F$ be a right exact  linear covariant functor over the category of $R$-modules. Then $F(M)$ is a strongly classical 2-absorbing second $R$-module if $F(M) \not =0$.
\end{thm}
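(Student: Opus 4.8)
The plan is to reduce everything to the internal characterization proved in Theorem \ref{t52.19}, specifically the equivalence $(a)\Leftrightarrow(c)$: a non-zero $R$-module is strongly classical 2-absorbing second exactly when, for all $a,b,c\in R$, one of the three equalities $abcM=abM$, $abcM=acM$, $abcM=bcM$ holds. Since $M$ is a strongly classical 2-absorbing second $R$-module it is such a submodule of itself, so this trichotomy is available for $M$, and the task is to transport it to $F(M)$.

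First I would fix $a,b,c\in R$ and, after relabeling, assume $abcM=abM$. Writing $r_M\colon M\to M$ for multiplication by $r\in R$, the hypothesis that $F$ is a \emph{linear} covariant functor gives $F(r_M)=r_{F(M)}$, so that the image of $F(r_M)$ is precisely $rF(M)$ for every $r$. Next I would factor the multiplication maps through their common image: put $S=abM=abcM$, let $j\colon S\hookrightarrow M$ be the inclusion, and let $\phi,\psi\colon M\twoheadrightarrow S$ be the surjections $m\mapsto abm$ and $m\mapsto abcm$. Then $(ab)_M=j\circ\phi$ and $(abc)_M=j\circ\psi$ as maps $M\to M$.

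Now I would apply $F$. Because $F$ is right exact it carries epimorphisms to epimorphisms, so $F(\phi)$ and $F(\psi)$ are surjective; hence $\mathrm{Im}\,F(j)F(\phi)=F(j)(F(S))=\mathrm{Im}\,F(j)F(\psi)$. Combining this with $F((ab)_M)=F(j)F(\phi)$, $F((abc)_M)=F(j)F(\psi)$, and the linearity identity $F(r_M)=r_{F(M)}$ of the previous step yields $abF(M)=\mathrm{Im}\,(ab)_{F(M)}=\mathrm{Im}\,(abc)_{F(M)}=abcF(M)$. The cases $abcM=acM$ and $abcM=bcM$ are handled identically. Since $F(M)\neq 0$ by hypothesis, Theorem \ref{t52.19} $(c)\Rightarrow(a)$ then shows that $F(M)$ is a strongly classical 2-absorbing second $R$-module.

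I do not expect a deep obstacle here once the equivalence $(a)\Leftrightarrow(c)$ is in hand; the content is essentially bookkeeping. The one point that needs care is the interface between the functor and the ring action: one must confirm that ``linear covariant functor'' is exactly what guarantees both that $F(M)$ is naturally an $R$-module and that $F(r_M)=r_{F(M)}$, and one must invoke the standard fact that an additive right exact functor preserves surjections. These are routine, but they are precisely what make the image-chasing above legitimate.
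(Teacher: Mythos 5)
Your proof is correct and follows essentially the same route as the paper: both reduce to the characterization of Theorem \ref{t52.19} $(a)\Leftrightarrow(c)$ and then transport the trichotomy $abcM=abM$ (or $acM$, or $bcM$) through $F$. The only difference is that the paper outsources the transport step to the citation \cite[3.14]{AF101}, whereas you verify it directly by factoring the multiplication maps through their common image and using right exactness (preservation of epimorphisms) together with linearity ($F(r_M)=r_{F(M)}$), which is a sound inline justification of the same fact.
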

\begin{proof}
 This follows from \cite[3.14]{AF101} and Theorem \ref{t52.19} $(a) \Rightarrow (c)$.
\end{proof}

\begin{cor}\label{c58.14}
Let $M$ be an $R$-module, $S$ be a multiplicative subset of $R$ and $N$ be a strongly classical 2-absorbing second submodule of $M$. Then $S^{-1}N$ is a strongly classical 2-absorbing second submodule of $S^{-1}M$ if $S^{-1}N\not =0$.
\end{cor}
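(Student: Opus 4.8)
The plan is to reduce everything to the internal characterization in Theorem \ref{t52.19}, namely the equivalence $(a)\Leftrightarrow(c)$: a non-zero submodule is strongly classical 2-absorbing second exactly when, for all $a,b,c$ in the ring, one of $abcN=abN$, $abcN=acN$, $abcN=bcN$ holds. Since we are given $S^{-1}N\neq 0$, it suffices to verify condition (c) for the $S^{-1}R$-module $S^{-1}N$ sitting inside $S^{-1}M$.

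First I would record the elementary bookkeeping fact that for every $a\in R$ and $s\in S$ we have $(a/s)S^{-1}N=S^{-1}(aN)$ as submodules of $S^{-1}M$: this is because $1/s$ is a unit of $S^{-1}R$, so multiplication by $a/s$ has the same image as multiplication by $a/1$, and $(a/1)S^{-1}N=S^{-1}(aN)$. Iterating this identity gives, for arbitrary $a/s,\,b/t,\,c/u\in S^{-1}R$, the equalities $(a/s)(b/t)(c/u)S^{-1}N=S^{-1}(abcN)$, $(a/s)(b/t)S^{-1}N=S^{-1}(abN)$, $(a/s)(c/u)S^{-1}N=S^{-1}(acN)$, and $(b/t)(c/u)S^{-1}N=S^{-1}(bcN)$.

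Next, since $N$ is a strongly classical 2-absorbing second submodule of $M$, Theorem \ref{t52.19} $(a)\Rightarrow(c)$ yields $abcN\in\{abN,\,acN,\,bcN\}$. Applying the exact (in particular equality-preserving) localization functor $S^{-1}(-)$ to whichever of these equalities holds gives $S^{-1}(abcN)\in\{S^{-1}(abN),\,S^{-1}(acN),\,S^{-1}(bcN)\}$, which by the previous step says precisely that $(a/s)(b/t)(c/u)S^{-1}N$ equals one of $(a/s)(b/t)S^{-1}N$, $(a/s)(c/u)S^{-1}N$, $(b/t)(c/u)S^{-1}N$. Thus condition (c) of Theorem \ref{t52.19} holds for $S^{-1}N$ over the ring $S^{-1}R$, and Theorem \ref{t52.19} $(c)\Rightarrow(a)$ then gives that $S^{-1}N$ is a strongly classical 2-absorbing second submodule of $S^{-1}M$.

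There is no genuine obstacle; the only delicate point is the first step, that replacing $a$ by $a/s$ does not change the submodule $(a/s)S^{-1}N$, so that condition (c) for $S^{-1}N$ reduces cleanly to condition (c) for $N$ under localization. Alternatively, one could note that $S^{-1}(-)$ is a right exact linear covariant functor and run the argument of Theorem \ref{t58.13} essentially verbatim, the only change being that the target category is that of $S^{-1}R$-modules rather than $R$-modules.
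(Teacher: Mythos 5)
Your proof is correct and follows essentially the same route as the paper, which simply invokes Theorem \ref{t58.13} (localization being a right exact linear covariant functor) together with the characterization of Theorem \ref{t52.19} $(a)\Leftrightarrow(c)$. In fact you are slightly more careful than the paper: the cited theorem only yields that $S^{-1}N$ satisfies condition (c) for scalars coming from $R$, and your observation that $1/s$ is a unit of $S^{-1}R$, so that $(a/s)S^{-1}N=(a/1)S^{-1}N=S^{-1}(aN)$, is exactly the bridging step needed to conclude the property over the ring $S^{-1}R$ rather than over $R$.
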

\begin{proof}
This follows from Theorem \ref{t58.13}.
\end{proof}
\bibliographystyle{amsplain}

\begin{thebibliography}{10}
\bibitem{AF101}
H. Ansari-Toroghy and F. Farshadifar, \emph{The dual notion of some generalizations of prime submodules}, Comm. Algebra, \textbf{39} (2011), 2396-2416.

\bibitem{AF07}
H.~Ansari-Toroghy and F.~Farshadifar, \emph{The dual notion of multiplication modules}, Taiwanese J. Math. \textbf{11} (4) (2007), 1189--1201.

\bibitem{AF11}
H. Ansari-Toroghy and F. Farshadifar, \emph{On the dual notion of prime submodules}, Algebra Colloq. \textbf{19} (Spec 1)(2012), 1109-1116.

\bibitem{AF25}
H. Ansari-Toroghy and F. Farshadifar, \emph{On the dual notion of prime radicals of submodules}, Asian Eur. J. Math. \textbf{6 } (2) (2013), 1350024 (11 pages).

\bibitem{AF16}
H.~Ansari-Toroghy and F.~Farshadifar, \emph{Some generalizations of second submodules}, Palestine Journal of Mathematics, \textbf{8}(2) (2019), 1-10. 

\bibitem{AFS124}
H.~Ansari-Toroghy and F.~Farshadifar, and S. S. Pourmortazavi, \emph{On the $P$-interiors of submodules of Artinian modules}, [1]       Hacettepe Journal of Mathematics and Statistics, 45(3) (2016), 675-682. 

\bibitem{Ba07}
A. Badawi, \emph{On 2-absorbing ideals of commutative rings}, Bull. Austral. Math. Soc. \textbf{75} (2007), 417-429.

\bibitem{Ba81}
A.~Barnard, \emph{Multiplication modules}, J. Algebra \textbf{71} (1981), 174-178.

\bibitem{CAS13}
S. Ceken, M. Alkan, and P. F. Smith, \emph{The dual notion of the prime radical of a module}, J. Algebra \textbf{392} (2013), 265-275.

\bibitem{YS11}
A. Y. Darani and F. Soheilnia, \emph{2-absorbing and weakly 2-absorbing submoduels}, Thai J. Math. \textbf{9}(3) (2011), 577–584.

\bibitem{Da78}
J.~Dauns, \emph{Prime submodules}, J. Reine Angew. Math. \textbf{298} (1978),
156--181.

\bibitem{FHo06}
L.~Fuchs, W.~Heinzer, and B.~Olberding, \emph{Commutative ideal theory without
finiteness conditions: Irreducibility in the quotient filed}, in : Abelian Groups, Rings, Modules, and Homological Algebra, Lect. Notes Pure Appl. Math.  \textbf{249} (2006), 121--145.

\bibitem{Kap11}
I.~Kaplansky, \emph{ Commutative rings}, University of Chicago Press, 1978.

\bibitem{mto16}
H. Mostafanasab, U. Tekir, and  K.H. Oral, \emph{classical 2-absorbing submodules of modules over commutative rings}, European Journal of Pure and Applied Mathematics, \textbf{8}(3) (2015), 417-430.

\bibitem{pb12}
Sh. Payrovi and S. Babaei, \emph{On 2-absorbing submodules}, Algebra Collq.  \textbf{19}, 913-920, (2012).

\bibitem{QB75}
P. Quartararo and H. S. Butts, \emph{Finite unions of ideals and modules}, Proc. Amer. Math. Soc. \textbf{52} (1975), 91-96.

\bibitem{SP72}
D.W. Sharpe and P. Vamos, \emph{Injective modules}, Cambridge University Press,
1972.

\bibitem{Y01}
S.~Yassemi, \emph{The dual notion of prime submodules}, Arch. Math. (Brno)
\textbf{37} (2001), 273--278.

\bibitem{Y98}
S. Yassemi, \emph{The dual notion of the cyclic modules}, Kobe. J. Math.
\textbf{15} (1998), 41--46.\
\end{thebibliography}

\end{document}